\definecolor{cinnamon}{rgb}{0.82, 0.41, 0.12}
\definecolor{ao(english)}{rgb}{0.0, 0.5, 0.0}
\algnewcommand\algorithmicparfor{\textbf{parallel for}}
\algnewcommand\algorithmicpardo{\textbf{do}}
\DeclarePairedDelimiter\ceil{\lceil}{\rceil}
\algrenewcommand\algorithmicrequire{\textbf{Input:}}
\algrenewcommand\algorithmicensure{\textbf{Output:}}
\newcommand{\N}{\mathbb{N}}
\newcommand{\TT}{\mathbb{T}}
\newcommand{\norm}[1]{\left\lVert#1\right\rVert}
\newcommand{\abs}[1]{\left|#1\right|}
\newcommand{\R}[0]{\mathbb{R}}
\newcommand{\C}[0]{\mathbb{C}}
\newcommand{\Nmin}{N^{\rm neg}_\alpha}
\newcommand{\modifyadd}[1]{\textcolor{black}{#1}}
\title{On the stability of unevenly spaced samples for interpolation and quadrature\thanks{Submitted to the editors \today.
\funding{This work is partially supported by the National Science Foundation grants DMS-1818757, DMS-1952757 and DMS-2045646.}}}
\author{Annan Yu\thanks{Center for Applied Mathematics, Cornell University, Ithaca, NY 14853. (\email{ay262@cornell.edu})}
\and Alex Townsend\thanks{Department of Mathematics, Cornell University, Ithaca, NY 14853. (\email{townsend@cornell.edu})}}
\begin{document}
\maketitle

\begin{abstract}
Unevenly spaced samples from a periodic function are common in signal processing and can often be viewed as a perturbed equally spaced grid. In this paper, we analyze how the uneven distribution of the samples impacts the quality of interpolation and quadrature. Starting with equally spaced nodes on $[-\pi,\pi)$ with grid spacing $h$, suppose the unevenly spaced nodes are obtained by perturbing each uniform node by an arbitrary amount $\leq \alpha h$, where $0\leq \alpha< 1/2$ is a fixed constant.  We prove a discrete version of the Kadec-1/4 theorem, which states that the nonuniform discrete Fourier transform associated with perturbed nodes has a bounded condition number independent of $h$, for any $\alpha<1/4$. We go on to show that unevenly spaced quadrature rules converge for all continuous functions and interpolants converge uniformly for all differentiable functions whose derivative has bounded variation when $0\leq \alpha<1/4$. Though, quadrature rules at perturbed nodes can have negative weights for any $\alpha>0$, we provide a bound on the absolute sum of the quadrature weights. Therefore, we show that perturbed equally spaced grids with small $\alpha$ can be used without numerical woes.  \modifyadd{While our proof techniques work primarily when $0 \leq \alpha < 1/4$}, we show that a small amount of oversampling \modifyadd{extends our results to the case} when $1/4\leq \alpha<1/2$.
\end{abstract}

\begin{keywords}
trigonometric interpolation, quadrature, Kadec-$1/4$ theorem, nonuniform discrete Fourier transformation, sampling theory
\end{keywords}

\begin{AMS}
42A15, 65D32, 94A20
\end{AMS}

\section{Introduction}\label{sec:introduction}
In signal processing, function approximation, and econometrics, unevenly spaced time series data naturally occur. For example, natural disasters occur at irregular time intervals~\cite{quan}, observational astronomy takes measurements of celestial bodies at times determined by cloud coverage and planetary configurations~\cite{vio}, clinical trials may monitor health diagnostics at irregular time intervals~\cite{stahl}, and wireless sensors only record information when a state changes to conserve battery life~\cite{gowrishankar}. In most applications, the samples can usually be considered as obtained from perturbed equally spaced nodes. Therefore, a common approach to deal with unevenly spaced samples is to first transform the data into equally spaced observations using some form of low-order interpolation~\cite{eckner2012algorithms}. However, transforming data in this way can introduce a number of significant and hard-to-quantify biases~\cite{scholes1977estimating,rehfeld2011comparison}.  Ideally, unevenly spaced time series are analyzed in the original form and there seems to be limited theoretical results on this in approximation theory~\cite{austintrefethen}. 


Suppose that there is an unknown $2\pi$-periodic function $f:[-\pi,\pi)\rightarrow \mathbb{C}$ that is sampled at $2N+1$ unevenly spaced nodes, and one would like to recover $f$ via interpolation or compute integrals involving $f$. How much does the uneven distribution of the samples impact the quality of interpolation or quadrature?  To make progress on this question, we assume that the unevenly spaced nodes can be viewed as perturbed equally spaced nodes. That is, we have acquired samples $f_{-N},\ldots,f_{N}$ from $f$ at nodes that are perturbed from equally spaced nodes, i.e.,
\begin{equation} 
f_j = f(\tilde{x}_j), \qquad \tilde{x}_j = (j + \delta_j)h  \qquad -N \leq j \leq N,
\label{eq:UnevenSamples} 
\end{equation} 
where $h = 2\pi/(2N+1)$ is the grid spacing and $\delta_j$ is the perturbation of $jh$ with $|\delta_j|\leq \alpha$ for some $0 \leq \alpha < 1/2$ (see~\cref{fig.perturb}). We call the nodes $\tilde{x}_{-N},\ldots,\tilde{x}_N$ a set of $\alpha$-perturbed nodes. Here, we assume that $\alpha < 1/2$ so that nodes cannot coalesce.  When $\alpha = 0$ the nodes are equally spaced and we are in a classical setting of approximation theory. In particular, when $\alpha=0$, one can use the fast Fourier transform (FFT)~\cite{cooley1965algorithm} to compute interpolants that converge rapidly to $f$~\cite{wright2015extension}. Moreover, the associated quadrature estimate is the trapezoidal rule, which is numerically stable and can be geometrically convergent for computing integrals involving $f$~\cite{hunter,trefethenweide}. Surprisingly, there has been far less theoretical attention on the case when $\alpha\neq 0$, despite it appearing in numerous applications and many encouraging numerical observations~\cite{austin,trefethenweide}.  A notable exception is Austin and Trefethen's work~\cite{austintrefethen}, where they showed that interpolants and the quadrature at perturbed grids converge when the underlying function $f$ is twice continuously differentiable and $\alpha<1/2$. Using a discrete analogue of the Kadec-1/4 theorem~\cite{kadec}, we strengthen these results when $\alpha<1/4$. 

\begin{figure}
\label{fig.perturb}
\centering 
\begin{tikzpicture} 
\draw[black,thick] (0,0)--(9,0); 
\draw[black,thick] (0,-.2)--(0,.2); 
\filldraw (.5,0) circle (2pt);
\filldraw (1.5,0) circle (2pt);
\filldraw (2.5,0) circle (2pt);
\filldraw (3.5,0) circle (2pt);
\filldraw (4.5,0) circle (2pt);
\filldraw (5.5,0) circle (2pt);
\filldraw (6.5,0) circle (2pt);
\filldraw (7.5,0) circle (2pt);
\filldraw (8.5,0) circle (2pt);
\draw[black,thick] (9,-.25) arc (-15:15:1);
\draw[red,ultra thick] (.5-0.333,0)--(.5+0.333,0); 
\draw[red,ultra thick] (1.5-0.333,0)--(1.5+0.333,0); 
\draw[red,ultra thick] (2.5-0.333,0)--(2.5+0.333,0); 
\draw[red,ultra thick] (3.5-0.333,0)--(3.5+0.333,0); 
\draw[red,ultra thick] (4.5-0.333,0)--(4.5+0.333,0); 
\draw[red,ultra thick] (5.5-0.333,0)--(5.5+0.333,0); 
 \draw[red,ultra thick] (6.5-0.333,0)--(6.5+0.333,0); 
\draw[red,ultra thick] (7.5-0.333,0)--(7.5+0.333,0); 
\draw[red,ultra thick] (8.5-0.333,0)--(8.5+0.333,0); 
\node at (0,-.5) {$-\pi$};
\node at (9,-.5) {$\pi$};
\node at (4.5,-.3) {$0$};
\node at (5.5,-.3) {$h$};
\node at (6.5,-.3) {$2h$};
\node at (7.5,-.3) {$3h$};
\node at (8.5,-.3) {$4h$};
\node at (3.5,-.3) {$-h$};
\node at (2.5,-.3) {$-2h$};
\node at (1.5,-.3) {$-3h$};
\node at (.5,-.3) {$-4h$};
\node at (4.5,.5) {$h=\frac{2\pi}{9}$};
\end{tikzpicture} 
\caption{Nine equally spaced nodes (black dots) on $[-\pi,\pi)$. The intervals (red) show where the unevenly spaced function samples can be when $\alpha =1/3$ in~\cref{eq:UnevenSamples}.}
\end{figure}
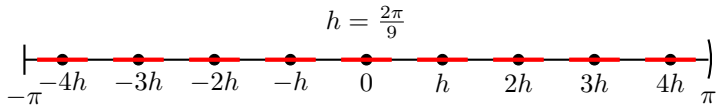


There are several aspects of unevenly spaced samples that we investigate: 

\begin{itemize}[leftmargin=*]
\item {\em Conditioning of a nonuniform discrete Fourier transform.}
The analogue of the FFT for unevenly spaced nodes is the nonuniform discrete Fourier transform (NUDFT). There are many variants of the NUDFT, but we focus on the one that is closely related to~\cref{eq:UnevenSamples} and common in signal processing~\cite{bagchi2012nonuniform}.  Let $N$ be an integer and $\underline{c} = \left(c_{-N},\ldots,c_{N}\right)^\top$ be a vector. The NUDFT task related to~\cref{eq:UnevenSamples} is to compute the vector $\underline{f} =  \left(f_{-N},\ldots,f_{N}\right)^\top$, defined by the following sums: 
\begin{equation} 
f_j = \sum_{k=-N}^{N} c_k e^{-i \tilde{x}_j k}, \qquad -N\leq j\leq N.
\label{eq:NUDFT}
\end{equation} 
As~\cref{eq:NUDFT} involves $2N+1$ sums each with $2N+1$ terms, the naive algorithms for computing $\underline{f}$ require $\mathcal{O}(N^2)$ operations; however, there are efficient algorithms that require only $\mathcal{O}(N\log N)$ operations~\cite{barnett2019parallel}. The NUDFT task has an inverse for any perturbed grid with $0\leq \alpha<1/2$, which aims to recover the vector $\underline{c}$ from $\underline{f}$. We show that for $\alpha<1/4$, the NUDFT task and its inverse have a condition number that can be bounded independent of $N$ when $\alpha<1/4$ (see~\cref{cor:NUDFTconditioning}). 

\item {\em Trigonometric interpolation.} 
A trigonometric polynomial $q$ of degree $\leq n$ is a function defined on $[-\pi,\pi)$ of the form 
\begin{equation}
	q(x) = \sum_{k=-n}^n c_k e^{ikx}, \qquad c_k \in \C.
	\label{eq:trigpoly}
\end{equation}
We denote the space of all trigonometric polynomials of degree $\leq n$ by $\mathcal{T}_n$. The goal of trigonometric interpolation is to find coefficients $c_{-n},\ldots,c_{n}$ such that $q$ interpolates $f$ at the samples, i.e., $q(\tilde{x}_j) = f_j$ for $-N\leq j\leq N$. Since the summand in~\cref{eq:trigpoly} can be reversed, an algorithm to find the coefficients is the inverse NUDFT when $n = N$. We show that interpolants at $\alpha$-perturbed nodes converge to $f$ for all differentiable functions whose derivative has bounded variation when $0\leq \alpha<1/4$ (see~\cref{thm.interpolatemax} and~\cref{thm.interpolateoptimal}). 
 
\item {\em Exact quadrature.}
A quadrature rule is a method for numerical integration that approximates the integral of $f$ by a weighted sum of the function's samples, i.e., 
\begin{equation} 
I = \int_{-\pi}^\pi f(x) dx \approx \tilde{I}_N=  \sum_{j=-N}^{N} \tilde{w}_j f(\tilde{x}_j).
\label{eq:quadratureRule} 
\end{equation} 
Here, $\tilde{x}_{-N},\ldots,\tilde{x}_N$ are called the quadrature nodes and $\tilde{w}_{-N},\ldots,\tilde{w}_N$ the quadrature weights. Given quadrature nodes, it is often desirable to design the quadrature weights so that~\cref{eq:quadratureRule} is exact for all trigonometric polynomials in $\mathcal{T}_n$ for some $n$. When $n = N$, the exact condition uniquely defines a quadrature rule for any $\alpha<1/2$. While quadrature rules at $\alpha$-perturbed nodes can have negative weights for any $\alpha>0$ (see~\cref{thm.negative}), we show that the absolute sums of the weights, i.e., $\sum_{j=-N}^N |\tilde{w}_j|$, is bounded independent of $N$ for $\alpha<1/4$ (see~\cref{thm.bounded}), which shows that the quadrature rules are numerically stable. We provide an explicit upper bound on $|I-\tilde{I}_N|$ and conclude that $\lim_{N\rightarrow\infty} \tilde{I}_N = I$ for all continuous periodic functions when $\alpha<1/4$.


\item {\em Marcinkiewicz--Zygmund inequalities.}
The stability of quadrature and interpolation is closely connected to so-called Marcinkiewicz--Zygmund (MZ) inequalities~\cite{mhaskar, grochenig}. We show that the following MZ inequality holds for all $q\in\mathcal{T}_N$: 
\begin{equation} 
	\frac{(1-\varphi_\alpha)^2}{2\pi}\! \int_{-\pi}^\pi |q(x)|^2 dx \leq \frac{1}{2N+1} \sum_{j=-N}^{N} \abs{q(\tilde{x}_j)}^2 \leq  \frac{(1+\varphi_\alpha)^2}{2\pi}\!\int_{-\pi}^\pi |q(x)|^2 dx,
\label{eq.MZp} 
\end{equation} 
where $\tilde{x}_{-N},\ldots,\tilde{x}_N$ are any $\alpha$-perturbed nodes in~\cref{eq:UnevenSamples} with $\alpha<1/4$ and $\varphi_\alpha = 1-\cos(\pi\alpha)+\sin(\pi\alpha)$. Other MZ inequalities at perturbed nodes are found in~\cite{marzoseip,ortega}. We use the MZ inequality in~\cref{eq.MZp} to derive explicit error bounds and the rate of convergence of quadrature rules and interpolants at $\alpha$-perturbed nodes when $0\leq \alpha < 1/4$. 
\end{itemize} 

\modifyadd{While our proof techniques work primarily for $0 \leq \alpha < 1/4$} when $N = n$ (see~\cref{sec:MZbounded}), we show that oversampling, i.e., $N>n$, \modifyadd{allows us to extend our results to the case when $1/4\leq \alpha <1/2$}. By oversampling by a factor of $1+\epsilon$, i.e., $N = \lceil (1+\epsilon)n\rceil$ for any $\epsilon>0$, we show that the same convergence results for interpolation and quadrature carry over to when $1/4\leq \alpha <1/2$ (see~\cref{sec:oversample}), improving on a result by Mhaskar, Narcowich, and Ward~\cite[Cor.~4.1]{mhaskar}.

\modifyadd{Many results in this paper are motivated by Austin's thesis~\cite{austin}. In particular, for $0 \leq \alpha < 1/4$, we prove Conjecture~3.10 of~\cite{austin} on the $2$-norm Lebesgue constant (see~\cref{cor.MZkadec}) and Conjecture~3.14 on the absolute sum of the quadrature weights (see~\cref{thm.bounded}). Our results for interpolation also confirm Conjecture 3.7. Moreover, we provide an answer to questions regarding the signs of quadrature weights raised in section~3.5.2 of~\cite{austin} (see~\cref{thm.negative} and~\cref{thm.thetaN}).}

The paper is structured as follows. In~\cref{sec:kadec}, we prove a discrete version of the Kadec-$1/4$ theorem that leads to a condition number bound on the nonuniform discrete Fourier transform and discuss its consequences. In~\cref{sec:interpolation} and~\cref{sec:quad}, we study the interpolation and quadrature rules at $\alpha$-perturbed nodes. In~\cref{sec:MZbounded} we look at further consequences of MZ inequalities. Finally, in~\cref{sec:oversample}, we investigate what happens when in the oversampling setting. 

\section{The Kadec-$\mathbf{1/4}$ theorem and its consequences}\label{sec:kadec}
In sampling theory, the Kadec-$1/4$ theorem shows that the Fourier modes $\left\{e^{i\lambda_k x}\right\}$ for $k\in\mathbb{Z}$ form a Riesz basis when $|\lambda_k-k|\leq \alpha$ and $0\leq \alpha<1/4$~\cite{kadec}, which in signal processing means that one can recover a square-integrable function if given its inner-products with $\left\{e^{i\lambda_k x}\right\}$. 
While the standard setting for Kadec's theorem applies to perturbing an infinite number of Fourier wave numbers from integers $k$ to $\lambda_k$, we show that there is a discrete analogue when perturbing a finite number of nodes. It has consequences for the condition number of the NUDFT and MZ inequalities. 

For an integer $N$ and $\alpha$-perturbed nodes $\{\tilde{x}_j\}$, let $F$ be the DFT matrix given by 
\[
F_{jk} = e^{-2\pi i jk/(2N+1)}, \qquad -N\leq j,k\leq N,
\]
and $\tilde{F}$ be a NUDFT matrix given by 
\begin{equation} 
\tilde{F}_{jk} = e^{-i \tilde{x}_j k} = e^{-2\pi i (j + \delta_j) k/(2N+1)}, \qquad -N\leq j,k\leq N. 
\label{eq:NUDFTmatrix}
\end{equation} 
The matrix $\tilde{F}$ is of interest because the sums in~\cref{eq:NUDFT} can be neatly written as the matrix-vector product $\tilde{F}\underline{c} = \underline{f}$. Therefore, the action of $\tilde{F}$ onto a vector is equivalent to evaluating a trigonometric polynomial of degree $N$ at $\tilde{x}_{-N},\ldots,\tilde{x}_N$. This can be performed in $\mathcal{O}(N\log N)$ operations~\cite{ruiz2018nonuniform} and in Chebfun (a MATLAB package for computing with functions)~\cite{driscoll2014chebfun} can be implemented as follows:
\begin{verbatim} 
 N = 1e4; alpha = 0.1; h = 2*pi/(2*N+1);
 tilde_x = (-N:N)'*h + alpha*(2*rand(2*N+1,1)-1)*h; % perturbed nodes
 c = randn(2*N+1,1);                                % trig coeffs
 Fc = exp(1i*tilde_x*N).*chebfun.nufft(c,tilde_x/(2*pi)); %\tilde{F}*c
\end{verbatim} 
An alternative algorithm that has even faster execution times is available in the fiNUFFT package~\cite{barnett2019parallel}. 

We now show that $\tilde{F}$ and $F$ are relatively close in the sense of their distance in the spectral norm. Our proof follows from a similar strategy to the proof of Kadec's theorem~\cite{young}. 


\begin{theorem}\label{thm.kadec}
Suppose that $\abs{\delta_j} \leq \alpha <1/4$ for all $-N\leq j\leq N$ in~\cref{eq:NUDFTmatrix}, then
\begin{equation}\label{eq.Kadec}
    \left\|F - \tilde{F}\right\|_2 \leq \varphi_\alpha\|F\|_2, \qquad \varphi_\alpha = 1-\cos(\pi\alpha)+\sin(\pi\alpha),
\end{equation}
where $\|\cdot\|_2$ is the spectral norm.
\end{theorem}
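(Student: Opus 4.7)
The plan is to adapt Young's proof of the classical Kadec-$1/4$ theorem to the discrete setting. Since $(2N+1)^{-1/2}F$ is unitary, Parseval gives $\|F\|_2=\sqrt{2N+1}$, so it suffices to show
\[
\sum_{j=-N}^{N}|q(\tilde{x}_j)-q(jh)|^{2}\;\le\;\varphi_\alpha^{2}\,(2N+1)\,\|\underline{c}\|_{2}^{2}
\]
for every $\underline{c}\in\C^{2N+1}$, where $q(x):=\sum_{k=-N}^{N}c_{k}e^{-ikx}$ is the associated trigonometric polynomial of degree $N$. The starting identity is
\(
q(\tilde{x}_j)-q(jh)=\sum_{k}c_{k}e^{-ikjh}(e^{-ik\delta_j h}-1).
\)
I would split $1-e^{-i\theta}=(1-\cos\theta)+i\sin\theta$ with $\theta=k\delta_j h$, bound the cosine and sine contributions separately by $(1-\cos(\pi\alpha))\sqrt{2N+1}\,\|\underline{c}\|_{2}$ and $\sin(\pi\alpha)\sqrt{2N+1}\,\|\underline{c}\|_{2}$, and then conclude via Minkowski's inequality with $\varphi_\alpha\sqrt{2N+1}\,\|\underline{c}\|_{2}$.

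For each piece I would use the Fourier series of $1-\cos(\delta t)$ and $\sin(\delta t)$ on $[-\pi,\pi]$, evaluated at $t=kh$. This is legitimate because $|kh|<\pi$ for $|k|\le N$, and the series converge pointwise there. Substitution converts each $\cos(\ell kh)$ and $\sin(\ell kh)$ into shifted values $q((j\pm\ell)h)$, producing the representation
\[
q(\tilde{x}_j)-q(jh)=-a_{0}(\delta_j)\,q(jh)-\sum_{\ell\ge 1}\bigl[\gamma_{\ell}^{+}(\delta_j)\,q((j-\ell)h)+\gamma_{\ell}^{-}(\delta_j)\,q((j+\ell)h)\bigr],
\]
with $a_0(\delta)=1-\sin(\pi\delta)/(\pi\delta)$ and $\gamma_{\ell}^{\pm}(\delta)\propto\sin(\pi\delta)/(\ell\pm\delta)$. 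Since $q$ is $2\pi$-periodic and $h=2\pi/(2N+1)$, the infinite $\ell$-sum collapses modulo $2N+1$ onto finitely many shifts $\ell'\in\{1,\ldots,2N\}$, with ``wrapped'' coefficients $\tilde{\gamma}_{\ell'}^{\pm}(\delta)=\sum_{q\ge 0}\gamma_{q(2N+1)+\ell'}^{\pm}(\delta)$ that are alternating series in $q$ (the alternation is preserved because $2N+1$ is odd). Applying Minkowski's inequality in the $j$-variable together with the cyclic-shift identity $\sum_{j}|q((j\pm\ell')h)|^{2}=(2N+1)\,\|\underline{c}\|_{2}^{2}$ reduces the claim to the scalar estimate
\[
\sup_{|\delta|\le\alpha}|a_{0}+\tilde{\gamma}_{0}^{+}+\tilde{\gamma}_{0}^{-}|\;+\;\sum_{\ell'=1}^{2N}\sup_{|\delta|\le\alpha}\bigl(|\tilde{\gamma}_{\ell'}^{+}|+|\tilde{\gamma}_{\ell'}^{-}|\bigr)\;\le\;\varphi_\alpha.
\]

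The cosine portion of this inequality telescopes exactly to $1-\cos(\pi\alpha)$ via the identity $a_0(\alpha)+\sum_{\ell\ge 1}|a_\ell(\alpha)|=1-\cos(\pi\alpha)$, which comes from evaluating the Fourier cosine series of $1-\cos(\delta t)$ at $t=\pi$. I expect the main obstacle to be the sine part: the naive estimate $\sum_\ell|b_\ell^{\max}|$ diverges logarithmically, so the bound $\sin(\pi\alpha)$ must arise from cancellation rather than from absolute estimates. Here I would exploit the alternating signs $(-1)^q$ surviving in the wrapped series $\tilde{\gamma}_{\ell'}^{\pm}$ and combine them with the unitarity of the discrete sine transform (equivalently, a careful Abel summation paired with the Parseval identity on the grid) to extract precisely $\sin(\pi\alpha)$ from the aggregated coefficients. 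Adding the cosine and sine bounds yields $(1-\cos(\pi\alpha))+\sin(\pi\alpha)=\varphi_\alpha$, completing the proof.
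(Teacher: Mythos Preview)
Your setup and the cosine half are fine and essentially mirror the paper's argument, but there is a genuine gap in the sine half. The scalar estimate you write down,
\[
\sup_{|\delta|\le\alpha}|a_{0}+\tilde{\gamma}_{0}^{+}+\tilde{\gamma}_{0}^{-}|+\sum_{\ell'=1}^{2N}\sup_{|\delta|\le\alpha}\bigl(|\tilde{\gamma}_{\ell'}^{+}|+|\tilde{\gamma}_{\ell'}^{-}|\bigr)\le\varphi_\alpha,
\]
is simply false for the integer sine expansion: with $b_\ell(\delta)\sim c\,\sin(\pi\delta)/\ell$ the wrapped coefficients satisfy $|\tilde{\gamma}^\pm_{\ell'}|\asymp 1/\ell'$, so the left side grows like $\log N$. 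Once you have applied Minkowski over $\ell'$ and taken the supremum over $\delta$ inside, all phase information is gone and no ``Abel summation'' or ``unitarity of the discrete sine transform'' can recover the cancellation you need; the object you are bounding is already a sum of nonnegative terms. So the proposed rescue is not an argument, and as written the proof does not close.

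The fix---and this is exactly what the paper does, following Young's proof of Kadec's theorem---is to expand the imaginary part of $1-e^{-i\delta t}$ not in $\{\sin(\ell t)\}$ but in the half-integer system $\{\sin((\ell-\tfrac12)t)\}_{\ell\ge 1}$. The resulting coefficients are $E_\ell(\delta)=(-1)^\ell\,2\delta\cos(\pi\delta)\big/\bigl(\pi[(\ell-\tfrac12)^2-\delta^2]\bigr)$, which decay like $1/\ell^2$; their absolute sum is the partial-fraction expansion of $\tan(\pi\alpha)\cdot\cos(\pi\alpha)=\sin(\pi\alpha)$, so Minkowski now gives the bound directly with no cancellation needed. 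The paper carries this out in the transposed picture $(F^\top-\tilde F^\top)\underline{c}$, using only $\|\sin((\ell-\tfrac12)\underline t)\|_\infty\le 1$ and the orthogonality of $\{e^{-ij\underline t}\}_j$. Your ``shifted $q$'' framework would also work with this basis, since $\sum_j|q((j+s)h)|^2=(2N+1)\|\underline c\|_2^2$ holds for \emph{any} real shift $s$, not just integers; the only thing missing from your proposal is the choice of expansion for $\sin(\delta t)$.
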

\begin{proof}
Since $\|F - \tilde{F}\|_2 = \|F^\top - \tilde{F}^\top\|_2$ and $\|F\|_2 = \|F^\top\|_2$, where superscript `$\top$' denotes the matrix transpose, we prove that $\|F^\top - \tilde{F}^\top\|_2\leq \varphi_\alpha \|F^\top\|_2$. Let $\underline{c} = (c_{-N}, \ldots, c_{N})^\top$ be a vector of unit length so that $\|\underline{c}\|_2 = 1$. We have
\begin{equation}
 (F^\top-\tilde{F}^\top)\underline{c} = \sum_{j=-N}^{N} e^{-ij\underline{t}} \circ \left(\underline{\mathbbm{1}} - e^{-i\delta_j\underline{t}}\right)c_j,
 \label{eq:NUDFTdifference}
\end{equation}
where $\underline{t} = (t_{-N},\ldots,t_{N})^\top$ with $t_k = 2\pi k/(2N+1)$, `$\circ$' is the Hadamard product denoting entry-by-entry multiplication between vectors, $\underline{\mathbbm{1}}$ is the column vector of all ones, and the exponential function of a vector is applied entrywise. As in~\cite{young}, for each $-N\leq j \leq N$, we write
\begin{align*}
    \underline{\mathbbm{1}} - e^{-i\delta_j\underline{t}} &= \underbrace{\left(1 - \frac{\sin (\pi\delta_j)}{\pi\delta_j}\right)\underline{\mathbbm{1}}}_{A_j} + \underbrace{\sum_{\ell=1}^\infty \frac{(-1)^\ell 2\delta_j \sin (\pi\delta_j)}{\pi(\ell^2 - \delta_j^2)} \cos(\ell\underline{t})}_{B_j} \\
    &\qquad -\underbrace{i\sum_{\ell=1}^\infty \frac{(-1)^\ell 2\delta_j \cos (\pi\delta_j)}{\pi(\ell - 1/2)^2 - \pi\delta_j^2} \sin\left(\left(\ell-\frac{1}{2}\right)\underline{t}\right)}_{C_j},
\end{align*}
where $\sin$ and $\cos$ of a vector is applied entrywise. By substituting this expression into~\cref{eq:NUDFTdifference}, we can bound $\| (F^\top-\tilde{F}^\top)\underline{c} \|_2$ by separately bounding $\sum_{j=-N}^{N} A_j\circ e^{-ij\underline{t}}c_j, \sum_{j=-N}^{N} B_j\circ e^{-ij\underline{t}}c_j,$ and $\sum_{j=-N}^{N} C_j\circ e^{-ij\underline{t}}c_j$.

\noindent \textbf{Bounding $\mathbf{\sum_{j=-N}^{N} A_j\circ e^{-ij\underline{t}}c_j}$.} Since $\abs{\delta_j} \leq \alpha < 1/4$, from elementary calculus, we have that $\max_j \left(1 - \sin(\pi\delta_j) / (\pi\delta_j)\right) \leq 1 - \sin(\pi\alpha)/(\pi\alpha)$. Also, since $\{e^{-ij\underline{t}}\}_{j=-N}^{N}$ is an orthogonal set and $A_j$ is a constant vector, $\{A_j \circ e^{-ij\underline{t}}c_j\}_{j=-N}^{N}$ is a set of orthogonal vectors. Hence, by the Pythagorean Theorem, we have
\begin{equation}\label{eq.controlA}
\begin{aligned}
    &\norm{\sum_{j=-N}^{N} A_j \circ e^{-ij\underline{t}}c_j}_2^2\!= \!\! \sum_{j=-N}^{N} \!\!\norm{A_j \circ e^{-ij\underline{t}}c_j}_2^2 \leq\! \left(1-\frac{\sin(\pi\alpha)}{\pi\alpha}\right)^2 \!\!\!\!\sum_{j=-N}^{N} \!\!\norm{e^{-ij\underline{t}}c_j}_2^2 \\
    &\qquad\qquad\qquad =\left(1-\frac{\sin(\pi\alpha)}{\pi\alpha}\right)^2 \norm{\sum_{j=-N}^{N} e^{-ij\underline{t}}c_j}_2^2 \leq \left(1-\frac{\sin(\pi\alpha)}{\pi\alpha}\right)^2\! \|F^\top\|_2^2. 
\end{aligned}
\end{equation}
\noindent \textbf{Bounding $\mathbf{\sum_{j=-N}^{N} B_j\circ e^{-ij\underline{t}}c_j}$.} Let $D_{\ell,j} = \left((-1)^\ell 2\delta_j \sin (\pi\delta_j)\right)/\left(\pi(\ell^2 - \delta_j^2)\right)$. Since $|D_{\ell,j}| = \mathcal{O}(\ell^{-2})$ and $\abs{\cos(\ell t_k) e^{-ijt_k}c_j}\leq 1$, we have by the Fubini--Tonelli Theorem that
\begin{equation*}
    \sum_{j=-N}^{N} \!\!B_j \circ e^{-ij\underline{t}} c_j = \!\!\sum_{j=-N}^{N}\!\! \left[\sum_{\ell = 1}^\infty D_{\ell,j}\cos(\ell \underline{t})\right] \circ e^{-ij\underline{t}} c_j = \sum_{\ell = 1}^\infty \sum_{j=-N}^{N} \!\!\left[D_{\ell,j}\cos(\ell \underline{t}) \circ e^{-ij\underline{t}}c_j\right].
\end{equation*}
Define $\underline{v}^\ell = \sum_{j=-N}^{N} \left[D_{\ell,j}\cos(\ell \underline{t}) \circ e^{-ij\underline{t}}c_j\right]$. Since $|\cos(\ell t_k) e^{-ijt_k}c_j|\leq 1$, there exists $C > 0$, independent of $\ell$, such that $\norm{\underline{v}^\ell}_2 \leq C\max_j \abs{D_{\ell,j}}$. Since $\max_j \abs{D_{\ell,j}} = \mathcal{O}(\ell^{-2})$, we have $\sum_{\ell = 1}^\infty  \norm{\underline{v}^\ell}_2 < \infty$. Hence, by Minkowski's inequality for integrals, we have $\norm{\sum_{\ell = 1}^\infty \underline{v}^\ell}_2 \leq \sum_{\ell = 1}^\infty \norm{\underline{v}^\ell}_2$ and
\begin{align*}
    \norm{\sum_{j=-N}^{N} B_j \circ e^{-ij\underline{t}}c_j}_2 &= \norm{\sum_{\ell = 1}^\infty \underline{v}^\ell}_2 \leq \sum_{\ell = 1}^\infty \norm{\underline{v}^\ell}_2 = \sum_{\ell = 1}^\infty \norm{\cos(\ell\underline{t}) \circ \sum_{j=-N}^{N} D_{\ell,j}e^{-ij\underline{t}}c_j}_2\\
    &\stackrel{(*)}{\leq} \sum_{\ell = 1}^\infty \norm{\cos(\ell\underline{t})}_\infty \norm{\sum_{j=-N}^{N} D_{\ell,j}e^{-ij\underline{t}}c_j}_2 \\
    & \stackrel{(**)}{\leq} \sum_{\ell = 1}^\infty \max_{j}\abs{D_{\ell,j}}\norm{\sum_{j=-N}^{N} e^{-ij\underline{t}}c_j}_2 \leq \|F^\top\|_2\sum_{\ell = 1}^\infty \max_{j}\abs{D_{\ell,j}},
\end{align*}
where ($*$) follows by applying H\"older's inequality and ($**$) holds due to the orthogonality of $\{e^{-ij\underline{t}}c_j\}_{j=-N}^N$.
Since $\sum_{\ell = 1}^\infty 2\alpha/[\pi(\ell^2 - \alpha^2)]$ is the partial fraction expansion of $[1/(\pi \alpha) - \cot(\pi\alpha)]$ and $|D_{\ell,j}|$ is maximized when $|\delta_j| = \alpha$, we have
\begin{equation*}
    \sum_{\ell=1}^\infty \max_{j}\abs{D_{\ell,j}} \leq \sum_{\ell=1}^\infty \frac{2\alpha \sin(\pi\alpha)}{\pi(\ell^2-\alpha^2)} = \frac{\sin(\pi\alpha)}{\pi\alpha} - \cos(\pi\alpha).
\end{equation*}
Hence, we find that
\begin{equation}\label{eq.controlB}
    \norm{\sum_{j=-N}^{N} B_j \circ e^{-ij\underline{t}}c_j}_2 \leq \left(\frac{\sin(\pi\alpha)}{\pi\alpha} - \cos(\pi\alpha)\right)\|F^\top\|_2.
\end{equation}
\noindent \textbf{Bounding $\mathbf{\sum_{j=-N}^{N} C_j\circ e^{-ij\underline{t}}c_j}$.} Let $E_{\ell,j} \!= \!\!\left((-1)^\ell 2\delta_j\!\cos (\pi\delta_j)\right)\!/\!\!\left(\pi[(\ell-1/2)^2 \!- \delta_j^2]\right)$. Then, we have that
\begin{equation*}
    \norm{\sum_{j=-N}^{N} \!\!C_j \circ e^{-ij\underline{t}}c_j}_2\!\! \leq \!\sum_{\ell = 1}^\infty \norm{\sin\!\left(\!\left(\ell-\frac{1}{2}\right)\!\underline{t}\right) \!\circ\!\!\! \sum_{j=-N}^{N} \!\!E_{\ell,j}e^{-ij\underline{t}}c_j}_2 \!\!\!\leq \!\|F^\top\|_2\!\sum_{\ell = 1}^\infty \!\max_{j}\abs{E_{\ell,j}}.
\end{equation*}
Since $\sum_{\ell = 1}^\infty 2\alpha/(\pi[(\ell-1/2)^2-\alpha^2])$ is the partial fraction expansion of $\tan(\pi\alpha)$ and $|E_{\ell,j}|$ is maximized when $|\delta_j| = \alpha$, we find that
\begin{equation*}
    \sum_{\ell = 1}^\infty \max_{j}\abs{E_{\ell,j}} \leq \sum_{\ell = 1}^\infty \frac{2\alpha\cos(\pi\alpha)}{\pi((\ell-1/2)^2 - \alpha^2)} = \sin(\pi\alpha).
\end{equation*}
Hence, we conclude that
\begin{equation}\label{eq.controlC}
    \norm{\sum_{j=-N}^{N} C_j \circ e^{-ij\underline{t}}c_j}_2 \leq \sin(\pi\alpha) \|F^\top\|_2.
\end{equation}
The statement of the theorem follows by combining \cref{eq.controlA}, \cref{eq.controlB}, and \cref{eq.controlC}.
\end{proof}
Since $F/\sqrt{2N+1}$ is a unitary matrix, we find that $\|F\|_2 = \sqrt{2N+1}$. Therefore,~\cref{thm.kadec} shows that $\|\tilde{F}-F\|_2\leq \varphi_\alpha \sqrt{2N+1}$. It is not immediately obvious that~\cref{thm.kadec} is a discrete analogue of Kadec's theorem. We can write $\tilde{F} = F + E$ with $\|E\|_2\leq \varphi_\alpha \|F\|_2$, so we have 
\begin{equation}
    \|\tilde{F}\|_2 \leq  (1+\varphi_\alpha)\|F\|_2.
 \label{eq:NUDFTnorm}
 \end{equation} 
Moreover, since all the singular values of $F$ are $\sqrt{2N+1}$, for $0\leq \alpha<1/4$ (as this ensures that $\varphi_\alpha<1$) we find the following bound on the inverse NUDFT by Weyl's inequality:
\begin{equation} 
\|\tilde{F}^{-1}\|_2 \leq \frac{1}{\norm{F}_2 - \norm{E}_2} \leq \frac{1}{(1-\varphi_\alpha)\|F\|_2}.
\label{eq:NUDFTinversenorm}
\end{equation} 
Hence,~\cref{thm.kadec} gives the following discrete version of Kadec's theorem, which is an analogue of the frame bound for Riesz bases in sampling theory. 
\begin{corollary} 
Under the same assumptions as~\cref{thm.kadec}, we have
\begin{equation} 
(1-\varphi_\alpha)^2\|\underline{c}\|_2^2 \leq \frac{1}{2N+1}\|\tilde{F}\underline{c}\|_2^2 \leq (1+\varphi_\alpha)^2\|\underline{c}\|_2^2, \qquad \underline{c}\in\mathbb{C}^{2N+1},
\label{eq:discretekadec} 
\end{equation} 
where $\varphi_\alpha$ is given in~\cref{eq.Kadec}. 
\end{corollary}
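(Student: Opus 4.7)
The plan is to derive the corollary as an immediate quantitative consequence of \cref{thm.kadec}, using only the triangle and reverse triangle inequalities together with the fact that $F/\sqrt{2N+1}$ is unitary. Writing $\tilde{F} = F + E$ where $E = \tilde{F} - F$, \cref{thm.kadec} gives $\|E\|_2 \leq \varphi_\alpha \|F\|_2$. Since $F/\sqrt{2N+1}$ is unitary, we have the exact identity $\|F\underline{c}\|_2 = \sqrt{2N+1}\,\|\underline{c}\|_2$ for every $\underline{c}\in\mathbb{C}^{2N+1}$, which is what converts the operator-norm bound into an equivalence of the form required.

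For the upper bound, I would apply the triangle inequality:
\begin{equation*}
\|\tilde{F}\underline{c}\|_2 \leq \|F\underline{c}\|_2 + \|E\underline{c}\|_2 \leq \|F\underline{c}\|_2 + \varphi_\alpha\|F\|_2\|\underline{c}\|_2 = (1+\varphi_\alpha)\sqrt{2N+1}\,\|\underline{c}\|_2,
\end{equation*}
using $\|F\|_2 = \sqrt{2N+1}$ at the last step. Squaring and dividing by $2N+1$ yields the right-hand inequality of \cref{eq:discretekadec}. For the lower bound I would use the reverse triangle inequality under the hypothesis $\alpha<1/4$ (so that $\varphi_\alpha<1$):
\begin{equation*}
\|\tilde{F}\underline{c}\|_2 \geq \|F\underline{c}\|_2 - \|E\underline{c}\|_2 \geq \sqrt{2N+1}\,\|\underline{c}\|_2 - \varphi_\alpha\sqrt{2N+1}\,\|\underline{c}\|_2 = (1-\varphi_\alpha)\sqrt{2N+1}\,\|\underline{c}\|_2,
\end{equation*}
and again squaring and dividing by $2N+1$ gives the left-hand inequality.

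There is essentially no obstacle here: the corollary is a two-line rewriting of the spectral-norm perturbation bound in \cref{thm.kadec} once one observes that $F$ is a scalar multiple of a unitary matrix, which is precisely why the equivalence constants $(1\pm\varphi_\alpha)^2$ are uniform in $N$. The only small subtlety worth flagging is the need for $\varphi_\alpha < 1$ in the lower bound, which is guaranteed by $\alpha<1/4$ inherited from \cref{thm.kadec}; without this the reverse triangle inequality still holds but the lower constant becomes non-informative (possibly negative before squaring). The statement can equivalently be read off from the previously displayed bounds \cref{eq:NUDFTnorm} and \cref{eq:NUDFTinversenorm}, since those encode exactly $\|\tilde{F}\|_2 \leq (1+\varphi_\alpha)\sqrt{2N+1}$ and $\|\tilde{F}^{-1}\|_2 \leq 1/((1-\varphi_\alpha)\sqrt{2N+1})$, which rearrange into \cref{eq:discretekadec}.
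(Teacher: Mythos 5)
Your proof is correct and takes essentially the same route as the paper: both rely on the decomposition $\tilde{F}=F+E$ with $\|E\|_2\leq\varphi_\alpha\|F\|_2$ from \cref{thm.kadec}, together with $\|F\|_2=\sqrt{2N+1}$, and the paper simply points to the already-displayed bounds \cref{eq:NUDFTnorm} and \cref{eq:NUDFTinversenorm}, which you correctly identify as equivalent to what you derive. The only cosmetic difference is that you apply the reverse triangle inequality directly to the vector $\tilde{F}\underline{c}$, while the paper reaches the same lower bound through the operator-norm estimate on $\tilde{F}^{-1}$ obtained via Weyl's inequality; you also flag the $\varphi_\alpha<1$ requirement, which matches the paper's remark.
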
 
\begin{proof} 
The inequalities follow immediately from~\cref{eq:NUDFTnorm},~\cref{eq:NUDFTinversenorm}, and the definition of the spectral norm.
\end{proof} 
When $\alpha\geq1/4$, we believe that there is no constant $C_1>0$ that is independent of $N$ such that $C_1\|\underline{c}\|_2^2\leq \tfrac{1}{2N+1}\|\tilde{F}\underline{c}\|_2^2$. This is because Levinson has showed that $\left\{e^{i\lambda_k x}\right\}$ for $k\in\mathbb{Z}$ does not always form a Riesz basis when $|\lambda_k-k|\geq 1/4$~\cite{kadec}.  Instead, since $\tilde{F}^{-1}$ exists for any perturbed nodes with $1/4\leq \alpha < 1/2$ (as $\tilde{F}$ is a Vandermonde matrix), there must be a $C_1>0$ that depends on $N$ such that $C_1\rightarrow 0$ as $N\rightarrow \infty$. Therefore, $\alpha = 1/4$ is less of a significant threshold in the discrete setting than in sampling theory. 
From~\cite[Chapt.~4]{austin}, it is likely that one can show that $C_1 = \Omega(N^{-4\alpha})$;\footnote{For two functions $g_1(N)$ and $g_2(N)$, one writes $g_1(N) = \Omega(g_2(N))$ if there is a constant $C>0$ that is independent of $N$ such that $g_1(N)\geq Cg_2(N)$ for all $N$.} however, this is not asymptotically tight. We do not know how to improve the lower bounds on $C_1$ for $1/4\leq \alpha<1/2$. 

\subsection{The condition number of a NUDFT matrix}\label{sec:NUDFT} 
Of course,~\cref{eq:discretekadec} also has something to say about the condition number of the NUDFT matrix in~\cref{eq:NUDFTmatrix}. While there are many different versions of a NUDFT matrix, we expect that similar bounds can be derived for their condition numbers. 
\begin{corollary}
\label{cor:NUDFTconditioning}
Under the same assumptions as~\cref{thm.kadec}, the condition number of $\tilde{F}$ can be bounded independently of $N$:
\begin{equation*}
\kappa_2(\tilde{F}) = \norm{\tilde{F}}_2\norm{\tilde{F}^{-1}}_2 \leq \frac{1+\varphi_\alpha}{1-\varphi_\alpha},
\end{equation*}
where $\varphi_\alpha$ is given in~\cref{eq.Kadec}. 
\end{corollary}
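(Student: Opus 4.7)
The plan is to combine directly the two spectral-norm bounds that the excerpt already established as consequences of Theorem~2.1, namely
\[
\|\tilde F\|_2 \leq (1+\varphi_\alpha)\|F\|_2 \qquad\text{and}\qquad \|\tilde F^{-1}\|_2 \leq \frac{1}{(1-\varphi_\alpha)\|F\|_2}.
\]
The first is equation~\cref{eq:NUDFTnorm} and follows from writing $\tilde F = F + E$ with $\|E\|_2 \leq \varphi_\alpha \|F\|_2$ and applying the triangle inequality. The second is equation~\cref{eq:NUDFTinversenorm} and follows from Weyl's inequality: since every singular value of $F$ equals $\sqrt{2N+1} = \|F\|_2$, every singular value of $\tilde F$ lies in the interval $[(1-\varphi_\alpha)\|F\|_2,\,(1+\varphi_\alpha)\|F\|_2]$, and in particular the smallest singular value satisfies $\sigma_{\min}(\tilde F) \geq (1-\varphi_\alpha)\|F\|_2$. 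The hypothesis $\alpha < 1/4$ is used precisely to ensure that $\varphi_\alpha < 1$, so that this lower bound is strictly positive and $\tilde F$ is invertible with a meaningful inverse-norm bound.

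With these two inequalities in hand, multiplying them gives
\[
\kappa_2(\tilde F) = \|\tilde F\|_2\,\|\tilde F^{-1}\|_2 \leq (1+\varphi_\alpha)\|F\|_2 \cdot \frac{1}{(1-\varphi_\alpha)\|F\|_2} = \frac{1+\varphi_\alpha}{1-\varphi_\alpha},
\]
and the $\|F\|_2 = \sqrt{2N+1}$ factors cancel, yielding an $N$-independent bound as claimed.

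There is essentially no obstacle here: the corollary is a one-line consequence of the displayed inequalities~\cref{eq:NUDFTnorm} and~\cref{eq:NUDFTinversenorm} that immediately follow Theorem~2.1. The only thing worth flagging in the write-up is the role of $\alpha < 1/4$ in guaranteeing $\varphi_\alpha < 1$, which is what makes the denominator $1-\varphi_\alpha$ strictly positive and hence makes the statement meaningful.
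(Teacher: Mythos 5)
Your proposal is correct and takes exactly the same approach as the paper, which likewise derives the bound by multiplying the already-established inequalities~\cref{eq:NUDFTnorm} and~\cref{eq:NUDFTinversenorm}. The extra remark about $\alpha < 1/4$ ensuring $\varphi_\alpha < 1$ is a reasonable clarification but not a departure from the paper's argument.
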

\begin{proof} 
The bound follows immediately from~\cref{eq:NUDFTnorm} and~\cref{eq:NUDFTinversenorm}.
\end{proof} 
Many fast algorithms for computing $\tilde{F}^{-1}\underline{f}$ rely on a Krylov solver and an $\mathcal{O}(N\log N)$ complexity matrix-vector product for $\smash{\tilde{F}}$ and $\smash{\tilde{F}^\top}$~\cite{dutt1993fast}. The number of required iterations of the Krylov solver depends on $\kappa_2(\tilde{F})$. \Cref{cor:NUDFTconditioning} shows that for perturbed equally spaced grids with a fixed constant $0\leq \alpha<1/4$, the number of iterations is independent of $N$. This means that these Krylov-based algorithms for computing $\tilde{F}^{-1}\underline{f}$ only require $\mathcal{O}(N\log N)$ operations when $0\leq \alpha<1/4$, which theoretically justifies previous numerical observations~\cite[Fig.~4]{ruiz2018nonuniform}. In Chebfun~\cite{driscoll2014chebfun}, a Krylov-based inverse NUDFT is implemented in the \texttt{chebfun.inufft} command~\cite{ruiz2018nonuniform}: 
\begin{verbatim} 
 N = 1e4; alpha = 0.1; h = 2*pi/(2*N+1);
 tilde_x = (-N:N)'*h + alpha*(2*rand(2*N+1,1)-1)*h; % perturbed nodes
 fx = randn(2*N+1,1);                               % func samples
 cfs = chebfun.inufft(exp(-1i*tilde_x*N).*fx,tilde_x/(2*pi));%trig cfs
\end{verbatim} 
However, we believe that Krylov-based algorithms do not have an $\mathcal{O}(N\log N)$ complexity when $1/4\leq \alpha <1/2$.  Instead, in the regime $1/4\leq \alpha<1/2$, other $\mathcal{O}(N\log N)$ algorithms can be used for the inverse NUDFT~\cite[Sect.~4]{dutt1995fast}, though we are not aware of any publicly available code. 

\subsection{Marcinkiewicz--Zygmund inequalities}\label{sec:MZ}
\Cref{eq:discretekadec} can also be rephrased as an MZ inequality at perturbed nodes. While more general MZ inequalities are available in the literature (see~\cite{marzoseip}) and will be discussed in~\cref{sec:MZbounded}, the inequalities here have explicit bounds.

\begin{corollary}\label{cor.MZkadec}
Let $\tilde{x}_{-N},\ldots,\tilde{x}_N$ be $\alpha$-perturbed nodes with $0\leq\alpha<1/4$. The following MZ inequalities hold: 
\begin{equation}
\label{eq:MZinequalities}
	\frac{(1\!-\!\varphi_\alpha)^2}{2\pi} \int_{-\pi}^\pi \! |q(x)|^2 dx \leq \sum_{j=-N}^{N} \! \frac{\abs{q(\tilde{x}_{j})}^2}{2N\!+\!1} \leq \frac{(1\!+\!\varphi_\alpha)^2}{2\pi} \int_{-\pi}^\pi \! |q(x)|^2 dx, \quad q \!\in\! \mathcal{T}_N,
\end{equation}
where $\varphi_\alpha$ is given in~\cref{eq.Kadec}. 
\end{corollary}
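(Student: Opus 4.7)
The plan is to reduce the inequalities to the discrete Kadec bound~\cref{eq:discretekadec} via Parseval's theorem. The point is that the middle term in~\cref{eq:MZinequalities} and $\int|q|^2$ are both controlled by the $\ell^2$-norm of the coefficient vector of $q$, with the former controlled through the NUDFT matrix $\tilde{F}$ and the latter through orthonormality of the Fourier modes.

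First, I would expand an arbitrary $q \in \mathcal{T}_N$ as $q(x) = \sum_{k=-N}^{N} c_k e^{ikx}$ and set $\underline{c} = (c_{-N},\ldots,c_N)^\top$. Parseval's theorem on $[-\pi,\pi)$ then yields $\int_{-\pi}^\pi |q(x)|^2 dx = 2\pi \|\underline{c}\|_2^2$. So the two outer terms in~\cref{eq:MZinequalities} are $(1\mp\varphi_\alpha)^2\|\underline{c}\|_2^2$, and the task is to show that $\frac{1}{2N+1}\sum_{j=-N}^{N}|q(\tilde{x}_j)|^2$ is sandwiched between them.

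Next, I would rewrite the sample values in terms of $\tilde{F}$. Because $\tilde{F}_{jk} = e^{-i\tilde{x}_j k}$ while the trigonometric expansion uses the opposite sign in the exponent, I introduce the reversal $\underline{c}' \in \mathbb{C}^{2N+1}$ defined by $c'_k = c_{-k}$; a change of summation index then gives
\begin{equation*}
q(\tilde{x}_j) = \sum_{k=-N}^{N} c_k e^{ik\tilde{x}_j} = \sum_{k=-N}^{N} c'_k e^{-ik\tilde{x}_j} = (\tilde{F}\underline{c}')_j, \qquad -N \leq j \leq N.
\end{equation*}
Since the reversal is an isometry, $\|\underline{c}'\|_2 = \|\underline{c}\|_2$.

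Finally, I would invoke~\cref{eq:discretekadec} with $\underline{c}'$ in place of $\underline{c}$, which gives
\begin{equation*}
(1-\varphi_\alpha)^2 \|\underline{c}\|_2^2 \leq \frac{1}{2N+1}\sum_{j=-N}^N |q(\tilde{x}_j)|^2 \leq (1+\varphi_\alpha)^2 \|\underline{c}\|_2^2,
\end{equation*}
and substitute $\|\underline{c}\|_2^2 = \frac{1}{2\pi}\int_{-\pi}^\pi|q(x)|^2 dx$ to obtain~\cref{eq:MZinequalities}. There is no real obstacle here; the only thing to be careful about is the sign convention matching $\tilde{F}$ to the Fourier expansion, which the reversal $c_k \mapsto c_{-k}$ handles cleanly.
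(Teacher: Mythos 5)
Your argument is the same as the paper's: reverse the Fourier coefficient vector so that $q(\tilde{x}_j) = (\tilde{F}\underline{c}')_j$ matches the sign convention of $\tilde{F}$ in~\cref{eq:NUDFTmatrix}, invoke the discrete Kadec bound~\cref{eq:discretekadec}, and convert $\|\underline{c}\|_2^2$ to $\frac{1}{2\pi}\int|q|^2$ via Parseval. The proof is correct and essentially identical to the one in the paper.
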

\begin{proof}
Let $q(x) = \sum_{k=-N}^N c_k e^{ikx}=\sum_{k=-N}^N c_{-k} e^{-ikx}$ be a trigonometric polynomial in $\mathcal{T}_N$. Note that we have the matrix equation $\tilde{F}\underline{c} = \underline{q}$ with $\tilde{F}$ in~\cref{eq:NUDFTmatrix}, where $\underline{c} = (c_N, \ldots, c_{-N})^\top$ and $\underline{q} = (q(\tilde{x}_{-N}), \ldots, q(\tilde{x}_N))^\top$. The result follows from~\cref{eq:discretekadec}, the definition of $\|\tilde{q}\|_2^2$, and Parseval's Theorem, i.e., $\norm{\underline{c}}_2^2 = \tfrac{1}{2\pi}\int_{-\pi}^\pi |q(x)|^2dx$. 
\end{proof}

 The MZ inequalities in~\cref{eq:MZinequalities} confirms that the 2-norm Lebesgue constant, which is denoted by $\tilde{\Lambda}_{2N+1}^{(2)}$ (see~\cite[sect.~3.4.1]{austin}), is bounded in $N$ when $0 \leq \alpha < 1/4$. In particular, we have $\tilde{\Lambda}_{2N+1}^{(2)} \leq 1/(1-\varphi_\alpha)$ for all $0 \leq \alpha < 1/4$ and integer $N$. The MZ inequalities are also useful for studying quadrature weights and the rate of convergence of quadrature rules and interpolation at perturbed nodes~\cite{grochenig,chui,konstantin}. By calculating the explicit constants in these bounds, we can give explicit error estimates, as opposed to asymptotic convergence bounds (see~\cref{sec:interpolation,sec:quad}). 

\section{Interpolation at unevenly spaced samples}\label{sec:interpolation}
At equally spaced nodes, trigonometric interpolation enjoys rapid convergence to functions. Suppose that $f:[-\pi,\pi)\rightarrow \mathbb{C}$ is a continuous periodic function and that $q_N(x)$ is its trigonometric interpolant in $\mathcal{T}_N$ at the equally spaced nodes $x_j = jh$ for $-N\leq j\leq N$, where $h = 2\pi/(2N+1)$. Then, if $f$ is $\sigma\geq 1$ times differentiable and $f^{(\sigma)}$ is of bounded variation $V$ on $[-\pi,\pi)$, then~\cite[Thm.~4.2]{wright2015extension}
\begin{equation} 
\|f - q_N\|_{L^\infty} \leq \frac{2V}{\pi \sigma N^{\sigma}},
\label{eq:diffFunctions}
\end{equation} 
where throughout this paper, $\norm{\cdot}_{L^r} = \norm{\cdot}_{L^r([-\pi,\pi))}$ is the standard $L^r([-\pi,\pi))$ norm induced by the Lebesgue measure on $[-\pi,\pi)$. While if $f$ is analytic with $|f(z)| \leq M$ in the open strip of half-width $\rho_0>0$ along the real axis in the complex plane, one has exponential convergence~\cite[Thm.~4.2]{wright2015extension}:
\begin{equation} 
\|f - q_N\|_{L^\infty} \leq \frac{4Me^{-\rho_0 N}}{e^{\rho_0}-1}.
\label{eq:analyticFunctions} 
\end{equation} 
One wonders if trigonometric interpolants at perturbed equally spaced nodes enjoy the same kind of rapid convergence. When $\alpha<1/4$, we can show that one loses at most a factor of $\sqrt{N}$ in the convergence rate via the MZ inequality in~\cref{eq:MZinequalities}.

\begin{theorem}\label{thm.interpolatemax}
Let $f: [-\pi,\pi) \rightarrow \C$ be a continuous periodic function and $0 \leq \alpha < 1/4$. For any $\alpha$-perturbed nodes of size $2N+1$, if $\tilde{q}_N \in \mathcal{T}_N$ is the corresponding interpolant of $f$, then 
\[
\norm{f-\tilde{q}_N}_{L^\infty} \leq \left(1 + \frac{\sqrt{2N+1}}{\cos(\pi\alpha) - \sin(\pi\alpha)}\right) \min_{q\in\mathcal{T}_N} \norm{f-q}_{L^\infty}.
\]
\end{theorem}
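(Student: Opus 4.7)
The plan is to follow the classical Lebesgue-type argument that reduces the interpolation error to a stability constant for the interpolation map. Let $q^* \in \mathcal{T}_N$ achieve $\min_{q\in\mathcal{T}_N}\|f-q\|_{L^\infty}$, and use the triangle inequality
\[
\|f-\tilde{q}_N\|_{L^\infty} \leq \|f-q^*\|_{L^\infty} + \|q^*-\tilde{q}_N\|_{L^\infty}.
\]
Since $q^*-\tilde{q}_N \in \mathcal{T}_N$ and $\tilde{q}_N$ interpolates $f$ at the perturbed nodes, the values of $q^*-\tilde{q}_N$ at $\tilde{x}_{-N},\ldots,\tilde{x}_N$ equal $q^*(\tilde{x}_j)-f(\tilde{x}_j)$, each bounded in modulus by $\|f-q^*\|_{L^\infty}$. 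Thus everything reduces to proving a sampling bound of the form
\[
\|p\|_{L^\infty} \leq \frac{\sqrt{2N+1}}{\cos(\pi\alpha)-\sin(\pi\alpha)} \max_{-N\leq j\leq N} |p(\tilde{x}_j)|, \qquad p \in \mathcal{T}_N,
\]
because then taking $p = q^*-\tilde{q}_N$ and adding $\|f-q^*\|_{L^\infty}$ gives exactly the claimed bound.

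To obtain this sampling bound, I would chain two inequalities. First, the lower half of the MZ inequality from \cref{cor.MZkadec} gives
\[
\|p\|_{L^2}^2 \leq \frac{2\pi}{(1-\varphi_\alpha)^2(2N+1)} \sum_{j=-N}^{N} |p(\tilde{x}_j)|^2 \leq \frac{2\pi}{(1-\varphi_\alpha)^2} \max_j |p(\tilde{x}_j)|^2.
\]
Second, for any $p(x) = \sum_{k=-N}^N c_k e^{ikx}\in \mathcal{T}_N$, Cauchy--Schwarz on the Fourier coefficients together with Parseval's identity yields the standard $L^2$-to-$L^\infty$ Nikolskii-type inequality
\[
\|p\|_{L^\infty} \leq \sqrt{2N+1}\,\Bigl(\sum_{k=-N}^N |c_k|^2\Bigr)^{1/2} = \sqrt{\tfrac{2N+1}{2\pi}}\,\|p\|_{L^2}.
\]
Multiplying these two estimates and using the identity $1-\varphi_\alpha = \cos(\pi\alpha)-\sin(\pi\alpha)$ produces exactly the sampling bound above.

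I expect the only subtle point to be verifying that $1-\varphi_\alpha > 0$ on the stated parameter range so that the MZ inequality has the correct sign and the constant is finite; since $\varphi_\alpha = 1-\cos(\pi\alpha)+\sin(\pi\alpha)$ and $\alpha<1/4$, one checks $\cos(\pi\alpha)>\sin(\pi\alpha)$, which holds for $\pi\alpha<\pi/4$. Everything else is routine: the MZ inequality supplies the nontrivial geometric information about the perturbed grid, while the Nikolskii step is dimension counting for trigonometric polynomials. Combining the three ingredients (triangle inequality, MZ lower bound, Nikolskii inequality) yields the stated constant $1 + \sqrt{2N+1}/(\cos(\pi\alpha)-\sin(\pi\alpha))$ directly, without any further loss.
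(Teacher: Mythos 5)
Your proposal is correct and follows essentially the same route as the paper: triangle inequality to reduce to $q^\star - \tilde q_N \in \mathcal{T}_N$, a Nikolskii-type $L^2$-to-$L^\infty$ bound $\|p\|_{L^\infty}\le\sqrt{(2N+1)/(2\pi)}\,\|p\|_{L^2}$, and the lower MZ inequality from Corollary~\ref{cor.MZkadec} to control the $L^2$ norm by the sampled values, yielding exactly the stated constant. The only cosmetic difference is that you derive the Nikolskii step by Cauchy--Schwarz on Fourier coefficients plus Parseval, while the paper obtains the identical bound via Young's inequality with the Dirichlet kernel.
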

\begin{proof}
First, note that for any trigonometric polynomial $q \in \mathcal{T}_N$ we have by Young's inequality and Parseval's Theorem that 
\[
	\norm{q}_{L^\infty} = \frac{1}{2\pi}\norm{p * q}_{L^\infty} \leq \frac{1}{2\pi} \norm{p}_{L^2} \norm{q}_{L^2} = \sqrt{\frac{2N+1}{2\pi}} \norm{q}_{L^2},
\]
where $(p\ast q)(x) = \int_{-\pi}^\pi p(x-s)q(s)ds$ and $p(x) = \sum_{j=-N}^N e^{ijx}$. If $\tilde{x}_{-N},\ldots,\tilde{x}_N$ are the perturbed nodes and $q$ is any polynomial in $\mathcal{T}_N$, then, by~\cref{eq:MZinequalities}, we have
\begin{align*}
	\norm{f-\tilde{q}_N}_{L^\infty} &\leq \norm{f-q}_{L^\infty} + \norm{q-\tilde{q}_N}_{L^\infty} \\
	&\leq \norm{f-q}_{L^\infty} + \sqrt{\frac{2N+1}{2\pi}} \norm{q-\tilde{q}_N}_{L^2} \\
	&\leq \norm{f-q}_{L^\infty} + \frac{1}{1-\varphi_\alpha} \sqrt{\sum_{j=-N}^N \abs{q(\tilde{x}_j) - f(\tilde{x}_j)}^2} \\
	&\leq \left(1 + \frac{1}{1-\varphi_\alpha} \sqrt{2N+1}\right) \norm{f-q}_{L^\infty},
\end{align*}
where $\varphi_\alpha$ is given by~\cref{eq.Kadec}.  The result follows by selecting $q$ to be the best fit polynomial in $\mathcal{T}_N$ to $f$ in $\|\cdot\|_{L^\infty}$ and noting that $1-\varphi_\alpha = \cos(\pi\alpha) - \sin(\pi\alpha)$. 
\end{proof}

Using~\cref{eq:diffFunctions} and~\cref{eq:analyticFunctions} along with the same smoothness conditions, we find that for any $0\leq \alpha<1/4$ and $\rho <\rho_0$, we have
\begin{equation} 
\|f - \tilde{q}_N\|_{L^\infty} =\mathcal{O}(N^{1/2-\sigma}), \quad \|f - \tilde{q}_N\|_{L^\infty} = \mathcal{O}(e^{-\rho N}),
\label{eq:FirstConvergenceInterpolation} 
\end{equation} 
respectively. This means that with unevenly spaced samples, one can safely use $\tilde{q}_N$ as a surrogate for $f$, and the price to pay for perturbed samples is minimal when $\alpha$ is small. In~\cref{thm.interpolateoptimal}, we show that the convergence rates in~\cref{eq:FirstConvergenceInterpolation} can be slightly improved.  Convergence results of interpolants at $\alpha$-perturbed nodes in the $L^2$ norm are also possible to derive from the MZ inequality in~\cref{eq:MZinequalities} by using~\cite[Thm~2.2]{grochenig}. 

One can easily do unevenly spaced trigonometric interpolation in Chebfun via the \texttt{chebfun.interp1} command. For example, below we compute a trigonometric polynomial that interpolates $\cos(x)$ at $\alpha$-perturbed nodes: 
\begin{verbatim} 
 N = 1e2; alpha = 0.1; h = 2*pi/(2*N+1);
 tilde_x = (-N:N)'*h + alpha*(2*rand(2*N+1,1)-1)*h; % perturbed nodes
 qN = chebfun.interp1(tilde_x, cos(tilde_x), 'periodic', [-pi pi]); 
\end{verbatim} 
Instead of using an optimal complexity algorithm based on the NUDFT, this code uses the so-called trigonometric barycentric formula~\cite{wright2015extension}. This approach has some advantages as there is a convenient way to update the formula when a new interpolation node becomes available.

\section{Quadrature at unevenly spaced samples}\label{sec:quad} 
Another important computational task is approximating integrals from knowledge of function samples at unevenly spaced nodes. While the quadrature rule at equally spaced nodes is the trapezoidal rule that enjoys perfect stability, positive quadrature weights, and rapid convergence, we wonder how much perturbing the nodes changes things. We are particularly interested in the signs of the quadrature weights~\cite{filbir,mhaskar}, the boundedness of the absolute sum of the quadrature weights~\cite{austin,mhaskar}, and the convergence rate of quadrature rules at perturbed nodes~\cite{austintrefethen,trefethenweide,grochenig}. 

\subsection{Are quadrature weights at perturbed nodes nonnegative?}
It is highly desirable to use quadrature rules for which all the weights are non-negative as such rules are perfectly stable. The absolute condition number of the integral $\int_{-\pi}^\pi f(x) dx$ is $2\pi$, while the absolute condition number of the quadrature rule in~\cref{eq:quadratureRule} is $\sum_{j=-N}^N |\tilde{w}_j|$. Since~\cref{eq:quadratureRule} is exact, we know that $\smash{\sum_{j=-N}^N \tilde{w}_j = \int_{-\pi}^\pi 1 dx = 2\pi}$ and hence the quadrature rule is perfectly stable when $\tilde{w}_j\geq 0$ for $-N\leq j\leq N$. Since the trapezoidal rule has all nonnegative weights, one might expect that quadratures at $\alpha$-perturbed nodes also have non-negative weights for sufficiently small $\alpha>0$. Surprisingly, we find that this is not case and now give a quadrature rule at $\alpha$-perturbed nodes with a negative weight for any fixed $\alpha>0$. 

For any $\alpha>0$ and integer $N$, consider the following perturbed grid where the nodes are maximally perturbed in an alternating fashion (see~\cref{fig:NegativeWeights}): 
\begin{equation} 
\tilde{x}_j = 
\begin{cases} 
(j-\alpha)h, & -N\leq j \leq -1, j=\text{ even},\\ 
(j+\alpha)h, & -N\leq j \leq -1, j=\text{ odd},\\ 
0, & j = 0,\\
(j-\alpha)h, & 1\leq j \leq N, j=\text{ odd},\\ 
(j+\alpha)h, & 1\leq j \leq N, j=\text{ even},\\ 
\end{cases} 
\qquad h = \frac{2\pi}{2N+1}.
\label{eq:perturbedQuadrature} 
\end{equation} 
\begin{figure}
\label{fig:NegativeWeights}
\centering 
\begin{tikzpicture} 
\draw[black,thick] (0,0)--(9,0); 
\draw[black,thick] (0,-.2)--(0,.2); 
\filldraw (.5,0) circle (2pt);
\filldraw (1.5,0) circle (2pt);
\filldraw (2.5,0) circle (2pt);
\filldraw (3.5,0) circle (2pt);
\filldraw[red] (4.5,0) circle (2pt);
\filldraw (5.5,0) circle (2pt);
\filldraw (6.5,0) circle (2pt);
\filldraw (7.5,0) circle (2pt);
\filldraw (8.5,0) circle (2pt);
\draw[black,thick] (9,-.25) arc (-15:15:1);
\draw [stealth-, thick] (.5-.333,.2) -- (.5,.2);
\draw [stealth-, thick] (1.5+.333,.2) -- (1.5,.2);
\draw [stealth-, thick] (2.5-.333,.2) -- (2.5,.2);
\draw [stealth-, thick] (3.5+.333,.2) -- (3.5,.2);
\draw [stealth-, thick] (5.5-.333,.2) -- (5.5,.2);
\draw [stealth-, thick] (6.5+.333,.2) -- (6.5,.2);
\draw [stealth-, thick] (7.5-.333,.2) -- (7.5,.2);
\draw [stealth-, thick] (8.5+.333,.2) -- (8.5,.2);
\filldraw[red] (.5-.333,0) circle (2pt);
\filldraw[red] (1.5+.333,0) circle (2pt);
\filldraw[red] (2.5-.333,0) circle (2pt);
\filldraw[red] (3.5+.333,0) circle (2pt);
\filldraw[red] (5.5-.333,0) circle (2pt);
\filldraw[red] (6.5+.333,0) circle (2pt);
\filldraw[red] (7.5-.333,0) circle (2pt);
\filldraw[red] (8.5+.333,0) circle (2pt);
\node at (0,-.5) {$-\pi$};
\node at (9,-.5) {$\pi$};
\node at (4.5,-.3) {$0$};
\node at (5.5,-.3) {$h$};
\node at (6.5,-.3) {$2h$};
\node at (7.5,-.3) {$3h$};
\node at (8.5,-.3) {$4h$};
\node at (3.5,-.3) {$-h$};
\node at (2.5,-.3) {$-2h$};
\node at (1.5,-.3) {$-3h$};
\node at (.5,-.3) {$-4h$};
\node at (4.5,.5) {$h=\frac{2\pi}{9}$};
\end{tikzpicture} 
\caption{For any $\alpha>0$ and sufficiently large $N$, a quadrature rule at $\alpha$-perturbed nodes with negative weights can be constructed by maximally perturbing equally spaced nodes in an alternating fashion, as depicted.}
\end{figure}
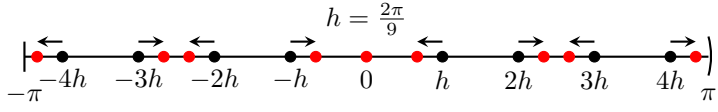
Associated with the perturbed nodes in~\cref{eq:perturbedQuadrature} is an exact quadrature rule such that 
\begin{equation} 
\int_{-\pi}^\pi q(x) dx= \sum_{j=-N}^N \tilde{w}_j q(\tilde{x}_j), \qquad q\in\mathcal{T}_N. 
\label{eq:perturbedQuadratureRule} 
\end{equation} 
We now show that $\tilde{w}_0$ is negative if $N$ is taken to be sufficiently large. This means that the quadrature rule in~\cref{eq:perturbedQuadratureRule} is unfortunately not perfectly stable. 
\begin{theorem}\label{thm.negative}
For any $\alpha > 0$ and sufficiently large even integer $N$, the quadrature weight $\tilde{w}_0$ in~\cref{eq:perturbedQuadratureRule} associated with the perturbed nodes in~\cref{eq:perturbedQuadrature} is negative. 
\end{theorem}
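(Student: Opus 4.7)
The plan is to express $\tilde{w}_0 = \int_{-\pi}^\pi L_0(x)\,dx$, where $L_0\in\mathcal{T}_N$ is the trigonometric Lagrange polynomial with $L_0(\tilde{x}_k)=\delta_{k0}$, and to show that this integral is negative for large even $N$. The antisymmetry $\tilde{x}_{-j}=-\tilde{x}_j$ built into~\cref{eq:perturbedQuadrature} lets me pair factors with opposite indices in the standard product formula
\[
L_0(x)=\prod_{j\neq 0}\frac{\sin((x-\tilde{x}_j)/2)}{\sin(-\tilde{x}_j/2)},
\]
which collapses $L_0$ to the polynomial $L_0(x)=\prod_{j=1}^N (\cos x-\cos\tilde{x}_j)/(1-\cos\tilde{x}_j)$ of degree $N$ in $\cos x$.

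Substituting $z=e^{ix}$ turns $\tilde{w}_0$ into a contour integral on $|z|=1$. The factor $\sin(x/2)$ arising from the substitution produces a pole at $z=1$ which cancels against the corresponding $(z-1)$-factor in $\prod_{j=-N}^N(z-e^{i\tilde{x}_j})$ (coming from $\tilde{x}_0=0$), so only the pole of order $N+1$ at $z=0$ contributes. A residue calculation yields
\[
\tilde{w}_0=\frac{2\pi\, s_N}{4^N \prod_{j=1}^N\sin^2(\tilde{x}_j/2)},
\]
where $s_N$ is the coefficient of $z^N$ in the palindromic polynomial $S(z)=\prod_{j=1}^N(z^2-2\cos(\tilde{x}_j)z+1)$. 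Since the denominator is positive, it suffices to show $s_N<0$. Using the palindrome structure, expanding $S(e^{i\theta})e^{-iN\theta}$ in the cosine basis and integrating yields the identity
\[
s_N=\frac{2^N}{\pi}\int_0^\pi\prod_{j=1}^N(\cos\theta-\cos\tilde{x}_j)\,d\theta,
\]
which evaluates to $1$ in the unperturbed case (reproducing $\tilde{w}_0=h$) and whose sign for $\alpha>0$ is what I must determine.

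To analyze the integral, I would use the pair structure: for $k=1,\ldots,N/2$, the nodes $\tilde{x}_{2k-1}$ and $\tilde{x}_{2k}$ share the $\alpha$-independent midpoint $y_k=(2k-1/2)h$ and common half-width $\beta=(1+2\alpha)h/2$, giving
\[
(\cos\theta-\cos\tilde{x}_{2k-1})(\cos\theta-\cos\tilde{x}_{2k})=(\cos\theta-\cos y_k\cos\beta)^2-\sin^2\beta\,\sin^2 y_k.
\]
In the unperturbed case $\beta=h/2$, while $\alpha>0$ strictly enlarges $\beta$, increasing the negative ``defect'' $\sin^2\beta\,\sin^2 y_k$ contributed by each pair. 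Comparing the perturbed product against the explicit unperturbed expression $\sin((2N+1)\theta/2)/(2^N\sin(\theta/2))$ and tracking the cumulative effect of the $N/2$ pair-wise corrections through a telescoping expansion, I would show that the accumulated defects reverse the sign of the integral.

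The main obstacle is the final asymptotic step. Both the unperturbed integral and the perturbation enter the integrand multiplicatively through $N/2$ factors, and each correction is nominally small ($\sim(\alpha h)^2\sin^2 y_k$), so a uniform-in-$N$ control of the accumulated contribution is needed to locate the crossover threshold $N_0=N_0(\alpha)$. The delicate point is that the integrand is highly oscillatory and the unperturbed value already tends to zero rapidly, so one must extract the $\alpha$-dependent sign-flipping contribution from a cancellation-prone sum rather than from a dominant leading term.
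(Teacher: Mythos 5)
Your reduction to $\tilde{w}_0 = 2\int_0^\pi \prod_{j=1}^N \frac{\cos\theta-\cos\tilde{x}_j}{1-\cos\tilde{x}_j}\,d\theta$ is correct (the symmetry $\tilde{x}_{-j}=-\tilde{x}_j$ indeed holds for the nodes in~\cref{eq:perturbedQuadrature}), and since the denominator is positive the problem collapses to determining the sign of $\int_0^\pi \prod_{j=1}^N(\cos\theta-\cos\tilde{x}_j)\,d\theta$. But this is exactly where your argument stops being a proof: the ``telescoping expansion'' and ``pair-wise corrections'' you gesture at are never made rigorous, and you acknowledge this yourself. The residue step and the conversion to $s_N$ merely recast the original integral without extracting any sign information, and the final asymptotic analysis --- the only part that would actually establish negativity, and the only genuinely hard part --- is left as an announced obstacle rather than carried out. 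As written, this is a plan with the decisive lemma missing.

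The paper avoids this oscillatory-integral difficulty entirely with a discretization trick you do not use: since $\ell_0\in\mathcal{T}_N$, the trapezoidal rule at the \emph{unperturbed} equally spaced grid $\{jh\}$ is \emph{exact} for $\int_{-\pi}^\pi \ell_0$, so $\tilde{w}_0 = \tfrac{2\pi}{2N+1}\bigl(1+\sum_{j\neq 0}\ell_0(jh)\bigr)$. This turns a cancellation-prone integral into a sum of terms whose signs are individually tractable. \Cref{lem.Ly} then shows each $\ell_0(kh)$ with $k\neq 0$ is negative and bounded away from zero by $\sin(\alpha h/2)/\sin((|k|+\alpha)h/2)\gtrsim \alpha/(|k|+\alpha)$, so the sum diverges harmonically to $-\infty$. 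If you want to salvage your cosine-product route, you would need an analogue of that lemma giving a one-sided, summable-to-$-\infty$ bound on the pairwise corrections; consider whether evaluating your product at the points $\theta=jh$ rather than integrating directly recovers the same mechanism.
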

%
%
%
\begin{proof}
Let $\ell_0$ be the trigonometric Lagrange polynomial for $\tilde{x}_0$ associated with the perturbed nodes in~\cref{eq:perturbedQuadrature}, i.e.,~\cite{henrici}
\begin{equation} 
\ell_0(x) = \prod_{j=-N,j\neq 0}^{N}\frac{\sin\left(\frac{x-\tilde{x}_j}{2}\right)}{\sin\left(\frac{\tilde{x}_0-\tilde{x}_j}{2}\right)}.
\label{eq:lagrange}
\end{equation}
Note that $\ell_0\in\mathcal{T}_N$ satisfies $\ell_0(\tilde{x}_j) = 0$ for $-N\leq j\leq N$ and $j\neq 0$ as well as $\ell_0(\tilde{x}_0) = 1$. Therefore, using the fact that~\cref{eq:perturbedQuadratureRule} is an exact quadrature rule, we have 
\[
    \tilde{w}_{0} = \sum_{j=-N}^N \tilde{w}_j \ell_0(\tilde{x}_j) = \int_{-\pi}^\pi \ell_0(x) dx = \frac{2\pi}{2N+1} \sum_{j=-N}^{N} \ell_0( jh ),
\]
where in the last equality we used the fact that the trapezoidal rule at equally spaced nodes is exact. Now, we have
\begin{equation*}
    \tilde{w}_{0} = \frac{2\pi}{2N+1}\left(1 + \sum_{\substack{j=-N\\j \neq 0}}^{N} \ell_0(jh)\right). \\
\end{equation*} 
Since $N$ is an even integer, by~\cref{lem.Ly} and the fact that $(2/\pi)x\leq \sin(x)\leq x$ for $x \in [0,\pi/2]$, we have
\begin{equation*}
\begin{aligned}
 \sum_{\substack{j=-N\\j \neq 0}}^{N} \ell_0(jh) \leq -2\sum_{j=1}^N \frac{\sin\left(\alpha h/2\right)}{\sin\left((jh + \alpha h)/2\right)} \leq -\frac{4}{\pi} \sum_{j=1}^N \frac{\alpha h}{jh + \alpha h} \rightarrow -\infty
\end{aligned}
\end{equation*}
as $N \rightarrow \infty$. Hence, for a sufficiently large even $N$, we have $\tilde{w}_0 < 0$.
\end{proof}

\cref{thm.negative} tells us that, no matter how small, a perturbation of equally spaced nodes can cause the corresponding quadrature rule to have a negative weight. However, when $\alpha$ is small, $N$ must be extremely large for this to happen for the perturbed nodes in~\cref{eq:perturbedQuadrature}. In fact, the proof in~\cref{thm.negative} requires that $N$ is even and so large that 
\begin{equation*}
	1 -\frac{4\alpha}{\pi} \sum_{j=1}^N\frac{1}{j+\alpha} < 0,
\end{equation*}
which is ensured when $N\geq \exp({\tfrac{\pi}{4\alpha}+\tfrac{1}{2}})$.\footnote{Note that $\sum_{j =1}^{N}1/(j+\alpha) > \sum_{j=1}^N 1/(j+1) >\log(N) - 1/2$.} Thus, while quadrature at $\alpha$-perturbed nodes can have negative weights when $\alpha=1/5$ and $N$ is an even integer $\geq 83$, for $\alpha = 10^{-2}$ negative weights are only ensured when $N$ is even and $N>2.12\times 10^{34}$! One may wonder if there are more devilish perturbations than in~\cref{eq:perturbedQuadrature}, which guarantee negative weights for much smaller $N$. In~\cref{section:lowerbound}, we show that there are no such perturbations in the sense that it is required that $\log(N) = \Omega(1/\alpha)$ to have a quadrature rule at $\alpha$-perturbed nodes with a negative weight.  The analysis in~\cref{section:lowerbound} is quite explicit. For example, we know that when $\alpha=10^{-2}$, all quadrature rules at $\alpha$-perturbed nodes with $N<9.4\times 10^6$ have positive weights. So, when $\alpha$ is small, such quadrature rules are perfectly stable for all practical $N$; however, for large perturbations one does begin to get concerned about their potential numerical instability. 

\subsection{Bounding the absolute sum of the quadrature weights}\label{sec:BoundedAbsoluteSum} 
To analyze the stability of quadrature rules at $\alpha$-perturbed nodes further, we take a look at the absolute sum of the quadrature weights as this is the absolute condition number of computing the quadrature rule. Since we are considering exact quadrature rules, there is a close connection between the NUDFT matrix, $\tilde{F}$, in~\cref{eq:NUDFTmatrix} and quadrature weights. In fact, since $e^{ijx}\in\mathcal{T}_N$ for $-N\leq j\leq N$, the quadrature rule must integrate them exactly. This means that the following linear system must be satisfied:
\begin{equation} 
\tilde{F}^\top \underline{\tilde{w}} = 2\pi \underline{e}_0, \qquad \underline{e}_0 = \left(0,\ldots,0,1,0,\ldots,0\right)^\top
\label{eq:linearSystemForWeights} 
\end{equation} 
where $\underline{\tilde{w}} = (\tilde{w}_{-N},\ldots,\tilde{w}_{N})^\top$ is the vector of quadrature weights. This means that the quadrature weights associated with $\alpha$-perturbed nodes are simple to compute: 
\begin{verbatim} 
  N = 1e3; alpha = 0.1; h = 2*pi/(2*N+1); 
  tilde_x = (-N:N)'*h + alpha*(2*rand(2*N+1,1)-1)*h; % perturbed nodes
  tF = exp(-1i*tilde_x*(-N:N));                      % NUDFT matrix
  e0 = zeros(2*N+1,1); e0(N+1)=1; w = tF'\(2*pi*e0); % quad wts
\end{verbatim} 
There are also $\mathcal{O}(N\log N)$ algorithms for solving the linear system in~\cref{eq:linearSystemForWeights} as it is equivalent to an inverse NUDFT (type I)~\cite{ruiz2018nonuniform}. 

We can also use the results in~\cref{sec:kadec} to get a bound on $\sum_{j=-N}^N|\tilde{w}_j|$.
\begin{theorem}\label{thm.bounded}
Let $0 \leq \alpha < 1/4$ and $N$ be a positive integer. For any quadrature rule at $\alpha$-perturbed nodes that is exact for $\mathcal{T}_N$, we have
\begin{equation*}
    \sum_{j=-N}^{N} \abs{\tilde{w}_j} \leq \frac{2\pi}{\cos(\pi\alpha) - \sin(\pi\alpha)},
\end{equation*}
where $\tilde{w}_{-N},\ldots,\tilde{w}_N$ are the quadrature rule's weights.
\end{theorem}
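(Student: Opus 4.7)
The plan is to exploit the linear system~\cref{eq:linearSystemForWeights}, which encodes the exactness condition, and then apply the spectral-norm bound on $\tilde{F}^{-1}$ derived from the discrete Kadec theorem. Since $\tilde{F}$ is invertible for $\alpha<1/2$ (it is a Vandermonde-type matrix), the weight vector is uniquely determined as $\underline{\tilde{w}} = 2\pi(\tilde{F}^\top)^{-1}\underline{e}_0$. This turns the problem of bounding the absolute sum of the weights into a problem of controlling an inverse NUDFT applied to a single unit vector, which is already in hand via~\cref{eq:NUDFTinversenorm}.

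The key steps are as follows. First, I would bound $\|\underline{\tilde{w}}\|_2$ by
\[
\|\underline{\tilde{w}}\|_2 = 2\pi\|(\tilde{F}^\top)^{-1}\underline{e}_0\|_2 \leq 2\pi\|(\tilde{F}^\top)^{-1}\|_2\|\underline{e}_0\|_2 = 2\pi\|\tilde{F}^{-1}\|_2,
\]
using the fact that the spectral norm is invariant under transposition. Next, I would invoke~\cref{eq:NUDFTinversenorm} (a direct corollary of~\cref{thm.kadec} and Weyl's inequality), which gives $\|\tilde{F}^{-1}\|_2 \leq 1/((1-\varphi_\alpha)\sqrt{2N+1})$ whenever $0\le\alpha<1/4$. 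Finally, I would pass from the $\ell^2$ norm to the $\ell^1$ norm on $\mathbb{C}^{2N+1}$ using Cauchy--Schwarz,
\[
\sum_{j=-N}^{N}|\tilde{w}_j| = \|\underline{\tilde{w}}\|_1 \leq \sqrt{2N+1}\,\|\underline{\tilde{w}}\|_2,
\]
and the factors of $\sqrt{2N+1}$ cancel, yielding the stated bound $2\pi/(1-\varphi_\alpha) = 2\pi/(\cos(\pi\alpha)-\sin(\pi\alpha))$ independent of $N$.

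An alternative route that I would keep as a sanity check uses the MZ inequality~\cref{eq:MZinequalities} directly. Let $s_j = \overline{\tilde{w}_j/|\tilde{w}_j|}$ (with $s_j=0$ when $\tilde{w}_j=0$), so that $|s_j|\leq 1$ and $\sum_j |\tilde{w}_j| = \sum_j \tilde{w}_j s_j$. Letting $q\in\mathcal{T}_N$ be the interpolant with $q(\tilde{x}_j)=s_j$, exactness of the rule gives $\sum_j |\tilde{w}_j| = \int_{-\pi}^{\pi} q(x)\,dx$, and then Cauchy--Schwarz in $L^2([-\pi,\pi))$ followed by the lower MZ bound on $\|q\|_{L^2}^2$ recovers the same constant. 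Both routes are essentially rephrasings of the same fact, but the matrix approach is the most direct given what has been established.

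I do not expect any real obstacle: the bound on $\|\tilde{F}^{-1}\|_2$ has been proved, and the passage from $\ell^2$ to $\ell^1$ in dimension $2N+1$ is the standard loss of a $\sqrt{2N+1}$ factor that is exactly compensated by the $1/\sqrt{2N+1}$ in the inverse-norm bound. The only subtlety worth flagging is that $\tilde{F}^{-1}$ may be complex, so when collapsing $\sum_j\tilde{w}_j s_j$ to a real quantity one must take a real part or absolute value; this is harmless and does not affect the constant.
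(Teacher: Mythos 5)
Your primary (matrix-norm) route is correct and differs genuinely from the paper's argument. The paper constructs a test polynomial $q\in\mathcal T_N$ with $q(\tilde x_j)=\operatorname{sgn}(\tilde w_j)$, uses exactness to rewrite $\sum_j|\tilde w_j|=\int_{-\pi}^\pi q$, and then applies the $L^1\!\to\!L^2$ Cauchy--Schwarz bound together with the \emph{lower} MZ inequality from~\cref{cor.MZkadec}; the factor $\sqrt{2N+1}$ from $\sum_j|q(\tilde x_j)|^2=2N+1$ cancels the $1/\sqrt{2N+1}$ in the MZ bound. You instead work entirely in $\mathbb C^{2N+1}$: solve~\cref{eq:linearSystemForWeights} explicitly as $\underline{\tilde w}=2\pi(\tilde F^\top)^{-1}\underline e_0$, bound $\|\underline{\tilde w}\|_2$ via~\cref{eq:NUDFTinversenorm}, and pass $\ell^2\!\to\!\ell^1$ by Cauchy--Schwarz, with the same $\sqrt{2N+1}$ cancellation appearing on the matrix side. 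The two arguments are essentially dual and reach the identical constant with the same amount of sharpness, but yours is a bit more direct given the inverse-norm bound already in hand, while the paper's phrasing keeps MZ inequalities front and center and is the form that transfers to the oversampled, rectangular regime of~\cref{sec:oversample}. Your alternative ``sanity-check'' route is essentially the paper's proof, and your flagged subtlety about complex signs is harmless and in fact vacuous here: the weights are real, since conjugating the exactness conditions $\sum_j\tilde w_je^{ik\tilde x_j}=2\pi\delta_{k0}$ for $-N\le k\le N$ maps the system to itself and uniqueness forces $\overline{\tilde w_j}=\tilde w_j$.
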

\begin{proof}
Let $q(x) = \sum_{k=-N}^N c_k e^{ikx}$ be the trigonometric polynomial in $\mathcal{T}_N$ such that
\[
    q(\tilde{x}_{j}) = \text{sgn}(\tilde{w}_j), \qquad -N \leq j \leq N,
\]
where $\text{sgn}(w) = -1$ if $w < 0$ and $\text{sgn}(w) = 1$ for $w\geq 0$. Then, by~\cref{cor.MZkadec} and H\"{o}lder's inequality, we have
\begin{align*}
    \sum_{j=-N}^{N} \abs{\tilde{w}_j} &= \sum_{j=-N}^{N} \tilde{w}_jq(\tilde{x}_j) = \int_{-\pi}^\pi q(x) dx \leq \int_{-\pi}^\pi |q(x)| dx \leq \sqrt{2\pi}\left(\int_{-\pi}^\pi |q(x)|^2 dx\right)^{1/2} \\
    &\leq \frac{\sqrt{2\pi}}{\cos(\pi\alpha) - \sin(\pi\alpha)} \sqrt{\frac{2\pi}{2N+1}\sum_{j=-N}^{N}\abs{q(\tilde{x}_{j})}^2} = \frac{2\pi}{\cos(\pi\alpha) - \sin(\pi\alpha)}.
\end{align*}
\end{proof}
While quadrature rules at $\alpha$-perturbed nodes can have negative weights for any $\alpha>0$,~\cref{thm.bounded} shows that they are relatively stable for small $\alpha$. For example, when $\alpha = 1/5$, while there is a negative weight for $N\geq 83$ with the nodes in~\cref{eq:perturbedQuadrature}, the condition number of the quadrature rule is $\leq 4.53\times (2\pi)$.


\subsection{Convergence rates of quadrature rules at perturbed nodes}\label{sec:QuadratureConvergence} 
By P\'{o}lya's celebrated theorem~\cite{polya},~\cref{thm.bounded} tells us that the quadrature rule associated with perturbed nodes when $0\leq \alpha <1/4$ converges when the integrand is continuous.  That is, for any continuous periodic function $f:[-\pi,\pi)\rightarrow \mathbb{C}$ we have
 \[
\tilde{I}_N = \sum_{j=-N}^N \tilde{w}_j f( \tilde{x}_j) \rightarrow I = \int_{-\pi}^\pi f(x)dx \quad \text{ as } \quad N\rightarrow\infty
 \]
provided that $|\tilde{x}_j - jh| < \alpha h$ for $-N\leq j\leq N$ and $0\leq \alpha<1/4$, where $h = 2\pi/(2N+1)$. In fact, together with~\cite[Thm.~2.7]{grochenig} and~\cref{cor.MZkadec}, we have that 
\[
\abs{\tilde{I}_N(f) - I(f)} \leq  \frac{4\pi}{\cos(\pi\alpha) - \sin(\pi\alpha)} \min_{q\in \mathcal{T}_N} \norm{f-q}_{L^\infty}
\]
for any continuous periodic function $f$. Therefore, if $f$ is $\sigma\geq 1$ times differentiable and $f^{(\sigma)}$ is of bounded variation $V$ on $[-\pi,\pi)$, then provided $0\leq \alpha<1/4$ (see~\cref{eq:diffFunctions}),
\begin{equation} 
\abs{\tilde{I}_N(f) - I(f)} \leq \frac{8V\pi}{(\cos(\pi\alpha) - \sin(\pi\alpha))\sigma} N^{-\sigma}.
\label{eq:QuadDiffFunctions} 
\end{equation} 
Alternatively, if $f$ is analytic with $|f(z)|\leq M$ in the open strip of half-width $\rho_0>0$ along the real axis in the complex plane, we have (see~\cref{eq:analyticFunctions}) 
\[
\abs{\tilde{I}_N(f) - I(f)} \leq \frac{16 M \pi}{\cos(\pi\alpha) - \sin(\pi\alpha)} \frac{e^{-\rho_0N}}{e^{\rho_0}-1}.
\]
Theorem~1.1 of~\cite{austintrefethen} also provides convergence rates on $|I-I_N|$ for quadratures rules associated with perturbed nodes. While their convergence rates hold for any $0\leq \alpha<1/2$, our rates are a strict improvement for differentiable functions for any $0\leq \alpha<1/4$. In particular, when $\alpha$ is close to $1/4$, the convergence rate in~\cref{eq:QuadDiffFunctions} is almost an order of $N$ improvement.

\section{Further consequences of MZ inequalities}\label{sec:MZbounded}
MZ inequalities are closely connected to the convergence of interpolation and quadrature. Up to now, we have only explored MZ inequalities using the $L^2$ norm, even though MZ inequalities in other $L^r$ norms are also useful for $1\leq r\leq \infty$.  The beauty of focusing on the $L^2$-based MZ inequalities is that we derived explicit constants in the bounds (see~\cref{eq:GeneralMZ}). However, $L^r$-based MZ inequalities are extensively studied and a significant amount is known at perturbed nodes~\cite{ortega,marzoseip,lubinsky}.  

\begin{proposition}\label{prop:NoMZinequality}
Fix $1\leq r < \infty$ and constants $C_1,C_2>0$. For any $0<\alpha<1/2$ such that $\alpha\geq \min\{1/(2r), (r-1)/(2r)\}$, there exists an integer $N$ and a set of $\alpha$-perturbed nodes $\tilde{x}_{-N},\ldots,\tilde{x}_N$ such that the following inequalities do not hold: 
\begin{equation} 
C_1 \int_{-\pi}^\pi |q(x)|^r dx \leq \frac{1}{2N+1} \sum_{j=-N}^{N} \abs{q(\tilde{x}_{j})}^r \leq C_2 \int_{-\pi}^\pi |q(x)|^r dx.
\label{eq:GeneralMZ}
\end{equation} 
Conversely, if $\alpha< \min\{1/(2r), (r-1)/(2r)\}$, then there exist constants $C_1,C_2>0$ (depending on $r$) such that~\cref{eq:GeneralMZ} holds for all $\alpha$-perturbed nodes and all $q\in\mathcal{T}_N$. 
\end{proposition}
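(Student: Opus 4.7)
The plan is to establish the two directions separately, taking advantage of the identity $(r-1)/(2r) = 1/(2r')$, where $r' = r/(r-1)$ is the H\"older conjugate exponent of $r$. The threshold can therefore be written $\min\{1/(2r), 1/(2r')\}$, which is symmetric under $r \leftrightarrow r'$. This symmetry reflects the standard duality between the upper and lower MZ inequalities via the empirical measure $(2\pi/(2N+1))\sum_{j=-N}^{N} \delta_{\tilde{x}_j}$: the upper inequality of~\cref{eq:GeneralMZ} in $L^r$ is equivalent, up to the dual pairing on $\mathcal{T}_N$, to the lower inequality in $L^{r'}$. It thus suffices to handle the case $r \geq 2$ (where the active constraint is $\alpha < 1/(2r)$) and deduce the case $r \leq 2$ by passing to the conjugate exponent.

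For the sufficiency direction, i.e.\ when $\alpha < \min\{1/(2r), 1/(2r')\}$, I would appeal to the existing literature on $L^r$ MZ inequalities at perturbed equally spaced nodes, particularly the results of~\cite{marzoseip,ortega,lubinsky}, which already yield both inequalities of~\cref{eq:GeneralMZ} with constants $C_1, C_2$ depending only on $r$ and $\alpha$ but not on $N$. Alternatively, one can adapt the Kadec-style proof of~\cref{thm.kadec} to the $L^r$ setting by replacing the Pythagorean theorem and orthogonality arguments used in~\cref{eq.controlA,eq.controlB,eq.controlC} with H\"older and Hausdorff--Young type estimates that relate $\ell^r$ and $\ell^{r'}$ norms of trigonometric sums. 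The case $r \leq 2$ is then obtained from the case $r' \geq 2$ by the duality argument noted above.

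For the necessity direction, I would construct explicit counterexamples. Given $r \geq 2$ and $\alpha \geq 1/(2r)$, the strategy is to cluster several consecutive perturbed nodes near a common point $x_0$ by choosing maximal signed perturbations $\delta_j = \pm \alpha$ in an appropriate pattern, thereby creating a region of local sample density strictly above average. Then, for a polynomial $q \in \mathcal{T}_N$ of the form $q(x) = K_N(x - x_0)$, where $K_N$ is a Jackson- or Fej\'er-type kernel concentrated near $x_0$ with controlled $L^r$ norm, one can arrange that $\sum_{j=-N}^{N}|q(\tilde{x}_j)|^r$ grows faster than $(2N+1)\|q\|_{L^r}^r/(2\pi)$ as $N \to \infty$, so that the upper inequality of~\cref{eq:GeneralMZ} fails with $C_2$ fixed. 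The companion case $r \leq 2$ with $\alpha \geq (r-1)/(2r) = 1/(2r')$ is obtained by applying the same construction at the conjugate exponent and invoking duality to produce failure of the lower inequality. The main obstacle will be calibrating the width of the cluster against the concentration width ($\sim 1/N$) of $K_N$ so that the \emph{sharp} threshold $1/(2r)$ emerges from the construction, rather than merely some value of $\alpha$ bounded away from zero.
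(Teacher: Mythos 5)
The paper does not actually prove this proposition: its proof is a one-line citation to \cite[Thm.~1.1]{marzoseip} and \cite[Thm.~5]{ortega}. Your proposal attempts to reconstruct a proof, and while the broad outline (symmetry under $r\leftrightarrow r'$, clustered-node counterexamples, Kadec-style estimates for sufficiency) is in the spirit of what those references do, there is a genuine gap at the very hinge of your argument: the claimed equivalence of the upper $L^r$ MZ inequality and the lower $L^{r'}$ MZ inequality ``up to the dual pairing on $\mathcal{T}_N$.'' In the discrete setting this is not a free lunch. The Lebesgue pairing $\tfrac{1}{2\pi}\int q\overline{p}\,dx$ and the empirical pairing $\tfrac{1}{2N+1}\sum_j q(\tilde{x}_j)\overline{p(\tilde{x}_j)}$ do \emph{not} coincide on $\mathcal{T}_N\times\mathcal{T}_N$, because $q\overline{p}$ has degree up to $2N$ while a $(2N+1)$-point quadrature at perturbed nodes is exact only on $\mathcal{T}_N$. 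Consequently, the $\ell^{r'}$-norm dual to the sampled $\ell^r$-norm (under the pairing that makes the adjoint of the sampling map tractable) is not the sampled $\ell^{r'}$-norm of a polynomial, and the adjoint of sampling is not itself a sampling map but rather a projection of weighted Dirac masses onto $\mathcal{T}_N$. So ``deduce the case $r\le 2$ by passing to the conjugate exponent'' is a non-trivial lemma that must itself be proved; it is in fact one of the substantive pieces in the cited papers, not a routine reduction.

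Two further points. For the necessity direction, a single $O(1/N)$-width cluster against a Jackson/Fej\'er peak is a reasonable first heuristic, but, as you yourself flag, it is unclear that the calibration can be made to land exactly on $\alpha = 1/(2r)$; the construction in \cite{marzoseip} is a global maximal-perturbation pattern (in the vein of~\cref{eq:perturbedQuadrature}) rather than a single cluster, and the sharp threshold falls out of the resulting lacunary structure rather than from a local density count. For the sufficiency direction, ``appeal to the literature'' is, of course, what the paper itself does, but ``adapt the Kadec argument of~\cref{thm.kadec} to $L^r$ via H\"older and Hausdorff--Young'' is a plan, not a proof: the orthogonality used in~\cref{eq.controlA}--\cref{eq.controlC} is precisely what is lost outside $r=2$, and replacing it requires the real-variable machinery of \cite{marzoseip,ortega}. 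In short, the proposal identifies the right ingredients but has not closed the key duality step, and the necessity construction is not yet sharp.
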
 
\begin{proof} 
A proof of this statement can be found in~\cite[Thm.~1.1]{marzoseip} and~\cite[Thm.~5]{ortega}.
\end{proof}

When $r = 2$,~\cref{eq:GeneralMZ} are the MZ inequalities found in~\cref{eq:MZinequalities} and we know that one can take $C_1 = (1-\varphi_\alpha)^2/(2\pi)$ and $C_2 =  (1+\varphi_\alpha)^2/(2\pi)$.~\Cref{prop:NoMZinequality} tells us that~\cref{cor.MZkadec} is sharp in the sense that for any $\alpha>1/4$ the constants $C_1$ and $C_2$ must depend on $N$. As $r$ varies between $1\leq r<\infty$, the value of $\min\{1/(2r), (r-1)/(2r)\}$ takes the maximum value of $1/4$ at $r = 2$.  This tells us that MZ inequalities cannot help us extend our results in this paper to the $1/4\leq\alpha<1/2$ regime. \modifyadd{On the other hand, the failure of MZ inequalities when $1/4\leq\alpha<1/2$ does not seem to cause problems for interpolation and quadrature when $\alpha \geq 1/4$. In fact, for each $N$, one can find a set of $1/4$-perturbed nodes that do not satisfy an MZ inequality for any $L^r$ norm but the associated exact quadrature rule has bounded absolute sum of its weights~\cite{marzoseip}. Hence, while $\alpha = 1/4$ is a theoretically important threshold for NUDFT conditioning and MZ inequalities, we find no indication that it is a threshold for interpolation and quadrature, which agrees with the findings in~\cite{austintrefethen}.}

However, we can use~\cref{prop:NoMZinequality} to improve the rates of convergence of interpolants at perturbed nodes when $0\leq \alpha<1/4$, and derive a convergence rate that depends on $\alpha$ for differentiable functions. 
\begin{theorem}\label{thm.interpolateoptimal}
Let $f:[-\pi,\pi)\rightarrow \mathbb{C}$ be a continuous periodic function and $0 \leq \alpha < 1/4$. For any $\alpha$-perturbed nodes of size $2N+1$, let $\tilde{q}_N\in\mathcal{T}_N$ be the corresponding interpolant of $f$. Then, for any $\alpha<\alpha_0 < 1/2$, we have
\begin{equation}\label{eq.interpoptimal}
	\norm{f-\tilde{q}_N}_{L^\infty} = \mathcal{O}(N^{2{\alpha_0}}) \min_{q \in \mathcal{T}_N} \norm{f-q}_{L^\infty}.
\end{equation} 
\end{theorem}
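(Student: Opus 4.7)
The plan is to repeat the argument of~\cref{thm.interpolatemax} with an $L^r$-based Marcinkiewicz--Zygmund inequality and a corresponding Nikolskii inequality in place of the $L^2$ ingredients, choosing $r$ so that the loss factor $N^{1/r}$ in the final bound is $\leq N^{2\alpha_0}$. The two tools are: (i) the MZ lower bound from~\cref{prop:NoMZinequality}, which, whenever $\alpha<\min\{1/(2r),(r-1)/(2r)\}$, supplies a constant $C_1=C_1(\alpha,r)>0$ such that $C_1\norm{p}_{L^r}^r \leq \tfrac{1}{2N+1}\sum_{j=-N}^N\abs{p(\tilde{x}_j)}^r$ for every $p\in\mathcal{T}_N$; and (ii) the Nikolskii inequality $\norm{p}_{L^\infty}\leq C_2 N^{1/r}\norm{p}_{L^r}$ for $p\in\mathcal{T}_N$ with $C_2=C_2(r)>0$, obtained by writing $p=\tfrac{1}{2\pi}D_N\ast p$ with the reproducing kernel $D_N(x)=\sum_{|k|\leq N}e^{ikx}\in\mathcal{T}_N$, applying H\"older's inequality, and invoking the standard estimate $\norm{D_N}_{L^{r'}}=\mathcal{O}(N^{1-1/r'})$ with $r'=r/(r-1)$.

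With these in hand, the core chain of inequalities mirrors the proof of~\cref{thm.interpolatemax}. For an arbitrary $q\in\mathcal{T}_N$, using $\tilde{q}_N(\tilde{x}_j)=f(\tilde{x}_j)$ at every node, I will estimate
\begin{align*}
\norm{q-\tilde{q}_N}_{L^\infty}
&\leq C_2 N^{1/r}\norm{q-\tilde{q}_N}_{L^r}\\
&\leq C_2 C_1^{-1/r} N^{1/r}\left(\frac{1}{2N+1}\sum_{j=-N}^N \abs{q(\tilde{x}_j)-f(\tilde{x}_j)}^r\right)^{1/r}\\
&\leq C_2 C_1^{-1/r} N^{1/r}\norm{f-q}_{L^\infty}.
\end{align*}
Combining with the triangle inequality $\norm{f-\tilde{q}_N}_{L^\infty} \leq \norm{f-q}_{L^\infty} + \norm{q-\tilde{q}_N}_{L^\infty}$ and taking $q$ to be the best $L^\infty$ approximant to $f$ in $\mathcal{T}_N$ then yields $\norm{f-\tilde{q}_N}_{L^\infty} \leq (1 + C_2 C_1^{-1/r} N^{1/r})\min_{q\in\mathcal{T}_N}\norm{f-q}_{L^\infty}$.

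To conclude, I need to exhibit an admissible $r$: one satisfying both $1/r \leq 2\alpha_0$ (so that $N^{1/r}\leq N^{2\alpha_0}$) and the MZ compatibility condition $\alpha<\min\{1/(2r),(r-1)/(2r)\}$. When $\alpha_0\leq 1/4$, I will set $r=1/(2\alpha_0)\geq 2$, whose MZ threshold is $1/(2r)=\alpha_0$, handled by the strict hypothesis $\alpha<\alpha_0$. When $\alpha_0>1/4$, I will simply set $r=2$, whose MZ threshold is the maximum value $1/4$, compatible with $\alpha<1/4$ by assumption, and note $1/r=1/2<2\alpha_0$. The main obstacle is this calibration of $r$, which relies on the fact (observed after~\cref{prop:NoMZinequality}) that $\min\{1/(2r),(r-1)/(2r)\}$ is maximized at $r=2$ with value $1/4$; once the choice is made, the constants depend only on $\alpha$ and $\alpha_0$ but not on $N$, so the resulting bound is genuinely $\mathcal{O}(N^{2\alpha_0})$, giving~\cref{eq.interpoptimal}.
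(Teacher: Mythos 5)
Your proposal is correct and takes essentially the same approach as the paper: apply the $L^r$-based MZ inequality from~\cref{prop:NoMZinequality} together with the Nikolskii-type bound $\norm{q}_{L^\infty}\lesssim N^{1/r}\norm{q}_{L^r}$ (via Young's inequality and the $L^{r'}$ norm of the Dirichlet kernel), then run the same triangle-inequality argument as in~\cref{thm.interpolatemax}. The only cosmetic difference is in handling the admissibility of $r$: where you split into the cases $\alpha_0\leq 1/4$ and $\alpha_0>1/4$, the paper instead reduces WLOG to $\alpha<(1-2\alpha_0)/2$ by replacing $\alpha_0$ with a smaller $\alpha_1$ when necessary, which amounts to the same calibration.
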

\begin{proof}
Without loss of generality, assume $\alpha < (1-2\alpha_0)/2$.\footnote{If $\alpha \geq (1-2\alpha_0)/2$, then one can pick some $\alpha_1$ such that $\alpha < \alpha_1 < \alpha_0$ and $\alpha < (1-2\alpha_1)/2$. Since~\cref{eq.interpoptimal} holds if $\alpha_0$ is replaced by $\alpha_1$, it also holds for $\alpha_0$.} Let $r = 1/(2\alpha_0)$ so that~\cref{eq:GeneralMZ} holds and let $r' = 1/(1-2\alpha_0)$ so that $1/r + 1/r' = 1$. For any $q \in \mathcal{T}_N$, by Young's inequality, we have
\begin{equation*}
	\norm{q}_{L^\infty} = \frac{1}{2\pi}\norm{p * q}_{L^\infty} \leq \frac{1}{2\pi} \norm{p}_{L^{r'}} \norm{q}_{L^r} \leq C (2N+1)^{1/r} \norm{q}_{L^r},
\end{equation*}
where $C$ is a constant independent of $N$ and $p(x) = \sum_{j=-N}^N e^{ijx}$. Here, we also used the fact that the $L^{r'}$ norm of $p$ is $\mathcal{O}(N^{1/r})$~\cite[Lem. 2.1]{anderson}.
By~\cref{eq:GeneralMZ}, we have
\begin{align*}
	\norm{f-\tilde{q}_N}_{L^\infty} &\leq \norm{f-q}_{L^\infty} + \norm{q-\tilde{q}_N}_{L^\infty} \\
	&\leq \norm{f-q}_{L^\infty} + C (2N+1)^{1/r} \norm{q-\tilde{q}_N}_{L^r} \\
	&\leq \norm{f-q}_{L^\infty} + C C_1^{-1/r} \left(\sum_{j=-N}^N \abs{q(\tilde{x}_j) - f(\tilde{x}_j)}^r\right)^{1/r} \\
	&\leq \left(1+ C C_1^{-1/r} (2N+1)^{1/r}\right) \norm{f-q}_{L^\infty},
\end{align*}
where $C_1>0$ is independent of $N$. The result follows by selecting $q$ to be the closest polynomial in $\mathcal{T}_N$ to $f$ in $\|\cdot\|_{L^\infty}$.
\end{proof}

By controlling the Lebesgue constant, Austin and Trefethen proved that if $f$ has $\sigma$ derivatives and $\sigma > 4\alpha$, then $\norm{f-\tilde{q}_N}_{L^\infty}= \mathcal{O}(N^{4\alpha-\sigma})$ and $\abs{\tilde{I}_N(f) - I(f)} = \mathcal{O}(N^{4\alpha-\sigma})$~\cite{austintrefethen}. They also conjectured that the factor $4\alpha$ in the exponent can be improved to $2\alpha$. \Cref{thm.interpolateoptimal} proves that the convergence rate for interpolants is arbitrarily close to what they conjectured as when $f$ has $\sigma$ derivatives we know that $\min_{q \in \mathcal{T}_N} \norm{f-q}_{L^\infty} = \mathcal{O}(N^{-\sigma})$. The convergence rate for quadrature rules is even better than conjectured. (See~\cref{sec:QuadratureConvergence}.)

\section{Oversampling with unevenly spaced samples}\label{sec:oversample}
Most of the results in the paper so far assumed that $0\leq \alpha<1/4$ as MZ inequalities may not hold for all $\alpha$-perturbed nodes when $\alpha\geq 1/4$ (see~\cref{prop:NoMZinequality}). In this section, we briefly discuss oversampling, where one assumes that there are more unevenly spaced samples than the number of coefficients in the desired trigonometric approximant. A small amount of oversampling allows us to \modifyadd{use MZ inequalities to} extend our results to the $1/4\leq \alpha <1/2$ regime. The oversampling regime is studied in approximation theory~\cite{grochenig}, including the higher dimensional setting~\cite{mhaskar}. Here, we restate existing results in terms of our notation and present sharper statements in one dimension. 

Let $0\leq \alpha<1/2$ and $\varepsilon>0$ be an oversampling rate. Suppose one has $\alpha$-perturbed samples of $f$ at $\tilde{x}_{-N},\ldots,\tilde{x}_N$. Since we are oversampling, we want to find a trigonometric polynomial of degree $\leq n$ (see~\cref{eq:trigpoly}) that ``fits" the samples of the function as best as possible, where $n$ is an integer such that $n \leq \lfloor(1-\varepsilon) N\rfloor$.  Since there is typically no hope of a trigonometric polynomial interpolant, a common approach is to compute an approximant by solving a least squares problem. To do this, we first construct a tall-skinny NUDFT matrix given by 
\begin{equation} 
\tilde{F}_{jk} = e^{-i\tilde{x}_jk}, \qquad -N\leq j\leq N, \quad -n\leq k\leq n 
\label{eq:RectangularNUDFT}
\end{equation} 
and then solve $\tilde{F}\underline{c} = \underline{f}$ for the vector $\underline{c}$, where $\underline{f} = \left(f(\tilde{x}_{-N}),\ldots,f(\tilde{x}_N)\right)^\top$. The least-squares approximant to $f$ can be formed as 
\begin{equation} 
\tilde{q}_n(x) = \sum_{k=-n}^n c_{k}e^{-ikx}. 
\label{eq:LSQ}
\end{equation}
For notational consistency, we continue to assume that there is an odd number of samples; however, all results in this section hold for an even number of samples too.

\subsection{Oversampled Marcinkiewicz--Zygmund inequalities}
In~\cref{sec:kadec}, we saw the MZ inequality is closely related to the NUDFT condition number via Parseval's theorem. Below, we state the MZ inequality in the oversampling case and then discuss its consequences for the condition number of $\tilde{F}$ in~\cref{eq:RectangularNUDFT}. 

Let $0\leq \alpha <1/2$ and $\varepsilon>0$. If $\tilde{x}_{-N},\ldots,\tilde{x}_N$ are $\alpha$-perturbed nodes and $n = \lfloor(1-\varepsilon) N\rfloor$. Then, it is easy to verify that 
\begin{equation*}
	\liminf_{R \rightarrow \infty} \left(\liminf_{N \rightarrow \infty} \frac{\min_{x \in [-\pi,\pi)} \abs{\{\tilde{x}_j\}_{j=-N}^{N} \cap (x,x+R/n)}}{R} \right) \geq \frac{1}{(1-\varepsilon)\pi}.
\end{equation*}
When this condition is satisfied, Ortega-Cerd\`a and Saludes~\cite{ortega} proved that the MZ inequality in~\cref{eq:GeneralMZ} holds for any $1 \leq r < \infty$ and $q\in\mathcal{T}_N$, where the constants $C_1, C_2 > 0$ are independent of $N, n, $ and $\tilde{x}_j$, but possibly depend on $\alpha, \varepsilon$, and $r$. 

\subsection{The condition number of a rectangular NUDFT matrix}
Let $\tilde{F}$ be the NUDFT matrix given in~\cref{eq:RectangularNUDFT}. Then, for any vector $\underline{c} \in \C^{2n+1}$, we can rewrite~\cref{eq:GeneralMZ} for $r = 2$ using Parseval's theorem to obtain
\begin{equation*}
2\pi C_1 \norm{\underline{c}}_2^2 \leq \frac{1}{2N+1} \norm{\tilde{F}\underline{c}}_2^2 \leq 2\pi C_2 \norm{\underline{c}}_2^2,
\end{equation*}
where $C_1,C_2>0$ are constants that only depend on $\alpha$ and $\varepsilon$. This immediately gives us a bound on the spectral norm of $\tilde{F}$, i.e., $\|\tilde{F}\|_2 \leq \sqrt{2\pi C_2(2N+1)}$. We also have
\begin{equation*}
	\norm{\tilde{F}^\dagger \underline{f}}_2^2 \leq \frac{1}{2\pi C_1(2N+1)} \norm{\tilde{F}\tilde{F}^\dagger  \underline{f}}_2^2 \leq \frac{1}{2\pi C_1(2N+1)} \norm{\underline{f}}_2^2, \qquad \underline{f} \in \C^{2N+1},
\end{equation*}
where $\tilde{F}^\dagger$ is the Moore--Penrose pseudoinverse of $\tilde{F}$. Here, the second inequality follows from the fact that $\|\tilde{F}\tilde{F}^\dagger\|_2 = 1$. We find that $\|\tilde{F}^\dagger\|_2 \leq 1/\sqrt{2\pi C_1(2N+1)}$ and hence, the condition number of $\tilde{F}$ is bounded independently of $N$ and $n$: 
\[
	\kappa_2(\tilde{F}) = \norm{\tilde{F}}_2 \norm{\tilde{F}^\dagger}_2 \leq \sqrt{\frac{C_2}{C_1}}.
\]
We find this a remarkable bound because as soon as there is a small amount of oversampling, the condition number of $\tilde{F}$ can be bounded independently of $N$ and $n$, even in the $1/4\leq \alpha<1/2$ regime. In contrast, we believe that when $N = n$, the condition number of $\tilde{F}$ grows slowly with $N$ when $1/4\leq \alpha<1/2$.  

\subsection{Convergence of least-squares approximation at unevenly spaced samples}
Let $f:[-\pi,\pi)\rightarrow \mathbb{C}$ be a continuous periodic function and let $\tilde{q}_n$ be the least-squares approximant in~\cref{eq:LSQ}. Then, by definition of $\tilde{q}_n$, we have that $\sum_{j=-N}^N \abs{f(\tilde{x}_j) - \tilde{q}_n(\tilde{x}_j)}^2 \leq \sum_{j=-N}^N \abs{f(\tilde{x}_j) - q(\tilde{x}_j)}^2$ for any $q \in \mathcal{T}_n$. Hence, we find that 
\begin{align*}
	&\sum_{j=-N}^N \abs{q(\tilde{x}_j) - \tilde{q}_n(\tilde{x}_j)}^2 \leq \sum_{j=-N}^N (\abs{q(\tilde{x}_j) - f(\tilde{x}_j)} + \abs{f(\tilde{x}_j) - \tilde{q}_n(\tilde{x}_j)})^2 \\
	&= \sum_{j=-N}^N (\abs{q(\tilde{x}_j) - f(\tilde{x}_j)}^2 + \abs{f(\tilde{x}_j) - \tilde{q}_n(\tilde{x}_j)}^2 + 2\abs{q(\tilde{x}_j) - f(\tilde{x}_j)} \abs{f(\tilde{x}_j) - \tilde{q}_n(\tilde{x}_j)}) \\
	&\leq 4 \sum_{j=-N}^N \abs{q(\tilde{x}_j) - f(\tilde{x}_j)}^2,
\end{align*}
where the last inequality follows from the Cauchy--Schwarz inequality. By a similar argument to the proof of~\cref{thm.interpolatemax}, we find that 
\[
\left\|f - \tilde{q}_n\right\|_{L^\infty} \leq \left(1 + \sqrt{\frac{2(2n+1)}{C_1\pi}}\right) \min_{q\in\mathcal{T}_n} \|f - q\|_{L^\infty}.
\]
The reader can now use their favorite bounds on $\min_{q\in\mathcal{T}_n} \|f - q\|_{L^\infty}$ (see~\cref{eq:diffFunctions,eq:analyticFunctions}). 

\subsection{Nonexact quadrature rules}
There also exists a quadrature rule that is exact for all $q \in \mathcal{T}_n$ such that~\cite[Thm.~2.7]{grochenig}
\[
\abs{\tilde{I}_n(f) - I(f)} \leq 2\pi \left(1 + \sqrt{\frac{C_2}{C_1}}\right) \min_{q\in \mathcal{T}_n} \norm{f-q}_{L^\infty}, \quad \tilde{I}_n(f) = \sum_{j=-N}^N \tilde{w}_j f(\tilde{x}_j)
\]
for all continuous periodic $f$. In Lemma~3.6 of~\cite{grochenig}, it is shown that the quadrature weights $\tilde{w}_{j}$ are the least-squares solution of the underdetermined system $\tilde{F}^\top \underline{\tilde{w}} = 2\pi \underline{e}_0$, where $\underline{e}_0$ is the zero vector except at its central entry is $1$.  

From numerical experiments, we observer that while oversampling usually avoids negative weights and also reduces the absolute sum of the weights, it is much more unpredictable as $n$ and $N$ vary. 
We know that oversampling by a minimal amount gives us good rates of convergence for any $\alpha < 1/2$. A natural question to ask is how much we need to oversample before we can guarantee that the quadrature weights  are non-negative. Again, this problem was studied in \cite{mhaskar} for higher dimensions. A translation of their one dimensional technique states that if $n\leq N/\pi$, then for every $N$ there exists a quadrature rule, $\tilde{I}_n$, that is exact for $q \in \mathcal{T}_n$ and has non-negative weights.  


\appendix

\section{A quadrature weight at perturbed nodes is negative}\label{section.proofnegative}
To show that the perturbed nodes in~\cref{eq:perturbedQuadrature} have an associated quadrature weight that is negative when $N$ is sufficiently large, we need a few trigonometric inequalities that are technical to derive.

Let $p, q, r > 0$ be such that $0 < q-p-2r < q-p+2r < \pi$. The following trigonometric inequality holds: 
\begin{equation}\label{eq.trigtech}
\begin{aligned}
    \sin(p-r)\sin(q+r) &= \frac{\cos(q-p+2r)-\cos(p+q)}{2} \\
    &< \frac{\cos(q-p-2r)-\cos(p+q)}{2} = \sin(p+r)\sin(q-r).
\end{aligned}
\end{equation} 
The next lemma bounds the values of $\ell_0$, the trigonometric Lagrange polynomial for $\tilde{x}_0$ associated with $\{\tilde{x}_j\}_{j=-N}^N$ (see~\cref{eq:lagrange}), at an equally spaced node. This helps us bound the weight $\tilde{w}_0$ in~\cref{thm.negative}.

\begin{lemma}\label{lem.Ly}
Suppose $N$ is even and $\tilde{x}_{-N},\ldots,\tilde{x}_N$ are the perturbed nodes given in~\cref{eq:perturbedQuadrature}. We have 
\begin{equation}\label{eq.crucialA}
	\abs{\ell_0(kh)} = -\ell_0(kh) \geq \frac{\sin\!\left( \alpha h/2\right)}{\sin\!\left((\abs{k} + \alpha)h/2\right)}, \qquad -N \leq k \leq N,\quad k\neq 0, 
\end{equation}
where $h = 2\pi/(2N+1)$.
\end{lemma}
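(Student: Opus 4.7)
The plan is to exploit the symmetry $s_{-j} = -s_j$ (where $s_j := \tilde{x}_j/h$) of the perturbed nodes in~\cref{eq:perturbedQuadrature} to collapse $\ell_0(kh)$ to a product over positive indices, isolate the factor at $j=|k|$ that will carry the claimed $\sin(\alpha h/2)/\sin((|k|+\alpha)h/2)$ bound, and control the remaining product via the trigonometric inequality~\cref{eq.trigtech}.

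First I would pair $j$ with $-j$ in the Lagrange product~\cref{eq:lagrange}. Using $\sin((k-s_{-j})h/2)=\sin((k+s_j)h/2)$, $\sin(-s_{-j}h/2)\sin(-s_{j}h/2) = -\sin^2(s_jh/2)$, and the product-to-sum identity $\sin A\sin B=\tfrac12(\cos(A-B)-\cos(A+B))$, one obtains (with $N$ even so the overall sign $(-1)^N=1$)
\[
\ell_0(kh) \;=\; \prod_{j=1}^{N}\frac{\sin((k-s_j)h/2)\sin((k+s_j)h/2)}{\sin^{2}(s_j h/2)} \;=\; \prod_{j=1}^{N} P_j, \qquad P_j := \frac{\cos(s_j h)-\cos(kh)}{1-\cos(s_j h)}.
\]
This expression is manifestly even in $k$, so $\ell_0(kh)=\ell_0(-kh)$ and we may assume $k\in\{1,\dots,N\}$.

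To pin down the sign, note that $1-\cos(s_jh)>0$ and $\cos$ is decreasing on $[0,\pi]$, so $P_j<0$ precisely when $s_j>k$. Since $s_j<k$ for $j<k$, $s_j>k$ for $j>k$, and $s_k=k-\alpha$ (for $k$ odd) or $s_k=k+\alpha$ (for $k$ even), the number of negative factors is $N-k$ when $k$ is odd and $N-k+1$ when $k$ is even; with $N$ even both counts are odd, so $\ell_0(kh)<0$ and $\abs{\ell_0(kh)}=-\ell_0(kh)$. For the magnitude, isolating $P_k$ and using $\cos(s_kh)-\cos(kh)=-2\sin((s_k+k)h/2)\sin((s_k-k)h/2)$ gives
\[
\abs{P_k} \;=\; \frac{\sin((2k \pm \alpha)h/2)\,\sin(\alpha h/2)}{\sin^2((k\pm\alpha)h/2)},
\]
with the $+$ sign when $k$ is even and the $-$ sign when $k$ is odd, so the target lower bound is equivalent to
\[
\frac{\sin((2k \pm \alpha)h/2)\,\sin((k+\alpha)h/2)}{\sin^2((k\pm\alpha)h/2)} \;\prod_{\substack{j=1\\ j\neq k}}^{N} \abs{P_j} \;\ge\; 1.
\]

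The main obstacle is controlling the residual product $\prod_{j\neq k}\abs{P_j}$. My plan is to group the remaining factors using the natural pairing $(2m-1,2m)$ dictated by the alternating perturbation: within each such pair, the denominator $\sin(s_{2m-1}h/2)\sin(s_{2m}h/2)$ and the two numerator products $\sin((s_{2m-1}\mp k)h/2)\sin((s_{2m}\mp k)h/2)$ all fit the template of~\cref{eq.trigtech} with $p=(2m-1)h/2$, $q=(2m)h/2$, $r=\alpha h/2$ (shifted by $\pm kh/2$ for the numerators), yielding clean comparisons to the corresponding integer-position benchmarks. Telescoping these comparisons across $m$ and combining with $\abs{P_k}$ produces the required inequality. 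The most delicate piece is the pair that straddles the index $k$ (where the sign of $s_j-k$ flips inside the pair); I would dispatch this by a short case analysis on the parity of $k$ and one more direct application of~\cref{eq.trigtech} tuned to this boundary pair.
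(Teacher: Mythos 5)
Your algebraic setup is correct and actually tidier than the paper's: pairing $j$ with $-j$ and using $\tilde{x}_{-j}=-\tilde{x}_j$ does give
\[
\ell_0(kh)\;=\;(-1)^N\prod_{j=1}^{N}\frac{\cos(\tilde{x}_j)-\cos(kh)}{1-\cos(\tilde{x}_j)},
\]
and the sign count (odd number of indices $j$ with $s_j>k$ when $N$ is even) and the isolation of $|P_k|=\sin((2k\pm\alpha)h/2)\sin(\alpha h/2)/\sin^2((k\pm\alpha)h/2)$ are all right.

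Where the plan breaks down is the ``residual product.'' The paper's proof succeeds because, after the reindexing it performs, the quantity $-\ell_0(kh)$ collapses to a \emph{short} product involving only indices $0,\dots,k$ (for $k$ even), with every factor at index $j>k$ cancelling identically; this cancellation is a consequence of the wrap-around identity $\sin\bigl((2N+1-m)h/2\bigr)=\sin(mh/2)$, i.e.\ $(2N+1)h=2\pi$, and it is what reduces the estimate to a single application of~\cref{eq.trigtech} over a \emph{bounded} range. Your form $\prod_{j=1}^{N}P_j$ has genuine (non-unit) factors at every index $1\le j\le N$, and pairing $(2m-1,2m)$ and applying~\cref{eq.trigtech} does not recover that cancellation: for a pair with $2m-1,2m>k$, one can check (writing $\sin A\sin B=\tfrac12[\cos(A-B)-\cos(A+B)]$) that
\[
|P_{2m-1}P_{2m}|-1 \;=\; \frac{4\sin^2(kh/2)\bigl[\cos\bigl((1+2\alpha)h/2\bigr)\cos\bigl((4m-1)h/2\bigr)-\cos^2(kh/2)\bigr]}{\bigl[\sin((2m{-}1{-}\alpha)h/2)\sin((2m{+}\alpha)h/2)\bigr]^2},
\]
whose sign flips as $m$ grows (the factor $\cos((4m-1)h/2)$ turns negative), so the pairwise estimates do not telescope monotonically. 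Moreover, \cref{eq.trigtech} applied at $p=(2m{-}1{+}k)h/2$, $q=(2m{+}k)h/2$, $r=\alpha h/2$ bounds the numerator factors of your $|P_j|$ from \emph{above}, which is the wrong direction for a lower bound on $\prod_{j\ne k}|P_j|$. In short, the missing ingredient is the mod-$(2N{+}1)$ reindexing that underlies the paper's derivation; without surfacing it, ``telescoping across $m$'' has no clear mechanism, and the sketch leaves a genuine gap precisely at the step you flag as the main obstacle.
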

\begin{proof}
Since $\ell_0(-x) = \ell_0(x)$ for all $x$, it suffices to show that~\cref{eq.crucialA} holds for $k > 0$. By a simple counting argument, one can verify that $\ell_0(kh) = \prod_{j=-N, j \neq 0}^N (\sin((kh - \tilde{x}_j)/2) / \sin(-\tilde{x}_j/2)) \leq 0$. Let $s(x) = \sin(\abs{xh/2})$. Then, the numerator of $\abs{\ell_0(kh)}$ can be written as
\begin{align*}
&\prod_{\substack{j = -N, \\ j \neq 0}}^N \sin \left(\abs{\frac{kh-\tilde{x}_{j}}{2}}\right) \\
&= \!\! \!\!\prod_{j=-N}^{-N+k-1}  \!\!\!\!\!\sin \! \left(\abs{\frac{kh-\tilde{x}_{j}}{2}}\right) \!\! \!\!\prod_{j=-N+k}^{-1} \!\!\!\!\!\sin\! \left( \abs{\frac{kh-\tilde{x}_{j}}{2}}\right) \!\! \prod_{j=1}^{k}  \!\sin\! \left( \abs{\frac{kh-\tilde{x}_{j}}{2}}\right)  \!\!\!\prod_{j=k+1}^{N} \!\! \!\sin\! \left(\abs{\frac{kh-\tilde{x}_{j}}{2}}\right) \\
&= \!\!\!\!\prod_{j=N-k+1}^N \!\!\!\!s(j \!+\! (-1)^{j+k}\alpha) \!\!\!\!\prod_{j=k+1}^N \!\!\!\!s(j \!+\! (-1)^{j+k}\alpha) \!\!\prod_{j=0}^{k-1} \!\!s(j \!+\! (-1)^{j+k+1}\alpha) \!\!\prod_{j=1}^{N-k} \!\!s(j \!+\! (-1)^{j+k}\alpha) \\
&= \prod_{j=1}^N s(j \!+\! (-1)^{j+k}\alpha) \prod_{j=k+1}^N s(j \!+\! (-1)^{j+k}\alpha) \prod_{j=0}^{k-1} s(j \!+\! (-1)^{j+k+1}\alpha).
\end{align*}
Similarly, the denominator of $\abs{\ell_0(kh)}$ can be written as
\begin{align*}
\prod_{\substack{j = -N, \\ j \neq 0}}^N \sin \left(\abs{\frac{\tilde{x}_j}{2}}\right) = \prod_{j=1}^N \sin^2 \left(\abs{\frac{\tilde{x}_j}{2}}\right) = \prod_{j=1}^N (s(j + (-1)^j\alpha))^2.
\end{align*}
Using~\cref{eq.trigtech}, we find the for every odd $1 \leq j \leq N$, we have
\begin{equation}\label{eq.balancetrig}
s(j + \alpha)s((j+1) - \alpha) \geq s(j - \alpha)s((j+1) + \alpha).
\end{equation}
Hence, for every odd $m_1$ and even $m_2$ such that $1\leq m_1<m_2\leq N$,~\cref{eq.balancetrig} gives us
\begin{equation}\label{eq.trigprod}
\prod_{j=m_1}^{m_2} s(j + (-1)^{j+1}\alpha) \geq \prod_{j=m_1}^{m_2} s(j + (-1)^{j}\alpha).
\end{equation}
We now consider the cases when $k$ is even and odd separately. 

\noindent{\textbf{Case I: $\mathbf{k}$ is even.}} If $k$ is even, then we have
\begin{align*}
-\ell_0(kh) &= \abs{\ell_0(kh)} = \left(\prod_{j=1}^{k-2} \frac{s(j + (-1)^{j+1} \alpha)}{s(j + (-1)^{j} \alpha)}\right) \frac{s(k-1 + \alpha)}{s(k-1 - \alpha)} \frac{s(\alpha)}{s(k+\alpha)}\\
&\geq \frac{s(\alpha)}{s(k+\alpha)} = \frac{\sin(\alpha h/2)}{\sin((k+\alpha)h/2)},
\end{align*}
where we used~\cref{eq.trigprod} with $m_1 = 1$ and $m_2 = k-2$.

\noindent{\textbf{Case II: $\mathbf{k}$ is odd.}} If $k$ is odd, then we have the following inequality on $-\ell_0(kh)$: 
\begin{align*}
-\ell_0(kh) &= \abs{\ell_0(kh)} \geq \left(\prod_{j=k+2}^{N} \frac{s(j + (-1)^{j+1} \alpha)}{s(j + (-1)^{j} \alpha)}\right) \frac{s(k+1 - \alpha)}{s(k+1 + \alpha)} \frac{s(\alpha)}{s(k-\alpha)} \\
&\geq \frac{s(k+1 - \alpha)}{s(k+1 + \alpha)} \frac{s(\alpha)}{s(k-\alpha)} \geq \frac{s(\alpha)}{s(k+\alpha)} = \frac{\sin(\alpha h/2)}{\sin((k+\alpha)h/2)},
\end{align*}
where the sequence of three inequalities are obtained from~\cref{eq.trigprod} by setting (1) $m_1 = 1$ and $m_2 = N$, (2) $m_1 = k+2$ and $m_2 = N$, and (3) $m_1 = k$ and $m_2 = k+1$, respectively.  The proof is complete.
\end{proof}

\section{Only large quadrature rules at perturbed nodes can have negative weights}\label{section:lowerbound}
For any $0<\alpha<1/2$, we define $\Nmin$ to be the smallest integer such that for $N = \Nmin$ there exists a set of $\alpha$-perturbed quadrature nodes $\{\tilde{x}_j\}_{j=-N}^{N}$ so that the associated exact quadrature rule on $\mathcal{T}_N$ has a negative weight. By~\cref{thm.negative}, we know that $\Nmin$ is finite for every $\alpha$. Here, we derive an implicit lower bound~\cref{eq.L1} on $\Nmin$ and provide a closed formula in~\cref{eq.N}, which proves $\log( \Nmin ) = \Theta(\alpha^{-1})$ as $\alpha \rightarrow 0$. This is a rather technical result. 

First, we define an equivalence relation on the indices. Let $-N \leq j, k \leq N$ and define $d(j,k) = \min\{\abs{j-k},2N+1-\abs{j-k}\}$. For a fixed $j$, define an equivalence class on $\{-N, \ldots, j-1, j+1, \ldots, N\}$ by $k_1 \sim_j k_2$ if and only if $d(j,k_1) = d(j,k_2)$ and denote the equivalence class by $S^j_{d(j,k_1)}$. Note that each of $S^j_1, \ldots, S^j_N$ contains exactly $2$ elements.

\begin{lemma}\label{lem.lagresidual}
Let $0\leq \alpha < 1/2$, $h = 2\pi/(2N+1)$, and $\{\tilde{x}_k\}_{k=-N}^{N}$ be a set of $\alpha$-perturbed nodes. For fixed $-N\leq j\leq N$ and for any $-N \leq k \leq N$ with $k \neq j$, we have
\begin{equation}
    \abs{\frac{\prod_{m=-N,m \neq j,k}^{N} \sin \!\left(\frac{x_{k} - \tilde{x}_m}{2}\right)}{\prod_{m=-N,m\neq j}^{N} \sin\! \left(\frac{\tilde{x}_j - \tilde{x}_m}{2}\right)}} \leq \frac{1}{\sin\!\left(\frac{d(j,k)h}{2}\right)} \prod_{m=1}^N \frac{\sin^2\!\left(\frac{mh + \alpha h}{2}\right)}{\sin\!\left(\frac{mh - 2\alpha h}{2}\right) \sin\!\left(\frac{mh}{2}\right)},
    \label{eq:ineq1}
\end{equation}
where $x_k = \tilde{x}_j + (k-j)h$.\footnote{Note that we do not necessarily have $\abs{x_k - \tilde{x}_k} \leq \alpha h$ for all $k$. Instead, we have $\abs{x_k - \tilde{x}_k} \leq 2\alpha h$.} 
\end{lemma}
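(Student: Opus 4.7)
The plan is to bound the ratio by extracting one factor from the denominator and then pairing up the remaining terms symmetrically in the numerator and denominator. First, using the identity $x_k - \tilde{x}_j = (k-j)h$, I would rewrite
$$\prod_{m \neq j, k} \sin\!\left(\tfrac{x_k - \tilde{x}_m}{2}\right) = \frac{1}{\sin\!\left(\tfrac{(k-j)h}{2}\right)} \prod_{m \neq k} \sin\!\left(\tfrac{x_k - \tilde{x}_m}{2}\right),$$
so that the left-hand side of the lemma becomes $|\sin((k-j)h/2)|^{-1}$ times the ratio of two products with $2N$ factors each. Since $|\sin((k-j)h/2)| = \sin(d(j,k)h/2)$ after reducing modulo $2\pi$, this produces the $1/\sin(d(j,k)h/2)$ prefactor in the stated bound.

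Next, I would pair the $2N$ factors of each product by circular distance. In the numerator, pair the indices $m = k \pm d \pmod{2N+1}$ for $d = 1,\dots,N$; in the denominator, pair $m = j \pm d \pmod{2N+1}$. Since $|\sin(\cdot/2)|$ is $2\pi$-periodic, the wrap-around causes no sign issues. The central trigonometric identity is
$$\sin(A+s)\sin(A+t) = \sin^2\!\left(A + \tfrac{s+t}{2}\right) - \sin^2\!\left(\tfrac{s-t}{2}\right).$$
For the numerator pair at distance $d$ from $k$, setting $A = dh/2$ and writing the two perturbation shifts as $s = -\zeta_1 h/2$ and $t = \zeta_2 h/2$ with $\zeta_i = \delta_j - \delta_{m_i}$, the key observation is that $s+t = (\delta_{m_1} - \delta_{m_2})h/2$, so the common $\delta_j$ cancels and $|s+t| \leq \alpha h$. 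Dropping the nonnegative $\sin^2((s-t)/2)$ term then yields the per-pair upper bound $\sin^2((d+\alpha)h/2)$, which is already in the form of the numerator on the right-hand side of the lemma.

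For the denominator pair at distance $d$ from $j$, the same identity applies, but now we need a lower bound. The perturbation shifts satisfy $|s|,|t| \leq \alpha h$ together with $|s+t|\leq \alpha h$, and I would minimize $\sin^2(dh/2+(s+t)/2) - \sin^2((s-t)/2)$ over this constraint region. The minimum is attained on the boundary with $(s+t)/2 = -\alpha h/2$ and $s = t$, and an application of the difference-of-squares identity $\sin^2(X) - \sin^2(Y) = \sin(X+Y)\sin(X-Y)$ at this corner produces exactly $\sin(dh/2)\sin((d-2\alpha)h/2)$. Multiplying the per-pair bounds over $d = 1, \ldots, N$ and combining with the extracted prefactor yields the claimed inequality.

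The main obstacle will be the denominator lower bound in the last step. While the numerator upper bound follows almost immediately by discarding a nonnegative term, the matching lower bound requires a genuine constrained optimization and the clean product form $\sin(dh/2)\sin((d-2\alpha)h/2)$ emerges only after invoking the difference-of-squares identity at the optimal boundary point. A secondary technical check is that all sine arguments remain in $(0,\pi/2)$, where $\sin$ is positive and increasing, so that the bounds can be applied termwise; this holds for $d \leq N$ and $\alpha < 1/2$ since then $(d \pm 2\alpha)h/2 \in (0,\pi/2)$.
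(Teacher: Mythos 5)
Your decomposition and line of attack are essentially those of the paper: pair the indices by circular distance from $k$ (numerator) and from $j$ (denominator), then bound each pair with an AM--GM--type trigonometric comparison, making the crucial observation that the common $\delta_j$ cancels in $s+t$ for a numerator pair. Your factorization step is a cosmetic streamlining of the paper's: by multiplying and dividing by $\sin\!\left((x_k-\tilde{x}_j)/2\right)=\sin\!\left((k-j)h/2\right)$ you obtain two clean $2N$-factor products, whereas the paper keeps the lone unpaired factor $\sin\!\left((x_k-\tilde{x}_{j'})/2\right)$ and absorbs it at the end. Your numerator per-pair bound $\sin^2\!\left((d+\alpha)h/2\right)$ is correct.

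The gap is in the denominator optimization. You claim the minimum of $\sin^2\!\left(A+\tfrac{s+t}{2}\right)-\sin^2\!\left(\tfrac{s-t}{2}\right)$ over $|s|,|t|\le\alpha h$, $|s+t|\le\alpha h$ is attained at $(s+t)/2=-\alpha h/2$ with $s=t$. But $s=t$ forces $(s-t)/2=0$, so the value there is $\sin^2\!\left(A-\alpha h/2\right)=\sin^2\!\left((m-\alpha)h/2\right)$, and no application of $\sin^2 X-\sin^2 Y=\sin(X+Y)\sin(X-Y)$ turns that into $\sin(mh/2)\sin\!\left((m-2\alpha)h/2\right)$; indeed $\sin^2\!\left((m-\alpha)h/2\right)$ is strictly larger, so that corner is not the minimizer and would yield an upper bound on the ratio that is too small. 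Writing $u=\tfrac{s+t}{2}$, $w=\tfrac{s-t}{2}$, the feasible region is the hexagon $|u|\le\alpha h/2$, $|u|+|w|\le\alpha h$, the objective decreases as $u$ decreases and as $|w|$ increases, and the true minimizer is the vertex $(u,w)=(-\alpha h/2,\pm\alpha h/2)$, i.e.\ $\{s,t\}=\{0,-\alpha h\}$, not $s=t$. Only at that vertex do you have $X=A-\alpha h/2$, $Y=\alpha h/2$, so that the identity gives $\sin(A)\sin(A-\alpha h)=\sin(mh/2)\sin\!\left((m-2\alpha)h/2\right)$, matching the paper's per-pair lower bound. Replacing ``$s=t$'' by ``$|s-t|=\alpha h$'' corrects the slip and the rest of your argument goes through.
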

\begin{proof}
Let $d = d(j,k)$ and $j'$ be the element in $S^k_{d}$ that is not equal to $j$. Then, we can write the numerator and denominator of~\cref{eq:ineq1} as 
\begin{align*}
    A &= \prod_{\substack{m=-N,\\m \neq j,k}}^{N}\! \sin \left(\!\frac{x_{k} \!-\! \tilde{x}_m}{2}\!\right) = \sin \left(\!\frac{x_k \!-\! \tilde{x}_{j'}}{2}\!\right) \!\prod_{\substack{m=1, m \neq d, \\ S^k_m = \{k_1, k_2\}}}^{N}\! \left[\sin \left(\!\frac{x_{k} \!-\! \tilde{x}_{k_1}}{2}\!\right) \sin \left(\frac{x_{k} \!-\! \tilde{x}_{k_2}}{2}\!\right)\right],\\
    B &= \prod_{\substack{m=-N,\\m\neq j}}^{N} \! \sin \left(\!\frac{\tilde{x}_j \!-\! \tilde{x}_m}{2}\!\right) = \!\prod_{\substack{m=1,\\ S^j_m = \{k_1, k_2\}}}^{N}\! \left[\sin \left(\!\frac{\tilde{x}_j \!-\! \tilde{x}_{k_1}}{2}\!\right) \sin \left(\!\frac{\tilde{x}_j \!-\! \tilde{x}_{k_2}}{2}\!\right)\right].
\end{align*}
Suppose $S_m^k = \{k_1,k_2\}$, where $1 \leq m \leq N$. Then, we know that $d_\TT(e^{ix_k}, e^{i\tilde{x}_{k_1}}) + d_\TT(e^{ix_k}, e^{i\tilde{x}_{k_2}}) \leq 2mh + 2\alpha h$
and $d_\TT(e^{ix_k}, e^{i\tilde{x}_{k_1}}), d_\TT(e^{ix_k}, e^{i\tilde{x}_{k_2}}) \leq mh+2\alpha h$, where $d_\TT(e^{ix}, e^{iy}) = \min\{\abs{x-y}, 2\pi - \abs{x-y}\}$ is the ``distance" between $e^{ix}$ and $e^{iy}$ on the unit circle for $x,y\in[-\pi,\pi)$. Therefore, by elementary calculus, we have
\begin{equation*}
    \abs{\sin \left(\frac{x_{k} - \tilde{x}_{k_1}}{2}\right) \sin \left(\frac{x_{k} - \tilde{x}_{k_2}}{2}\right)} \leq \sin^2 \left(\frac{mh+\alpha h}{2}\right),
\end{equation*}
where the lefthand side of the inequality above is maximized when $d_\TT(e^{ix_k}, e^{i\tilde{x}_{k_1}}) = d_\TT(e^{ix_k}, e^{i\tilde{x}_{k_2}}) = mh+\alpha h$. Similarly, suppose $S_m^j = \{k_1,k_2\}$. We have $d_\TT(e^{i\tilde{x}_j}, e^{i\tilde{x}_{k_1}}) + d_\TT(e^{i\tilde{x}_j}, e^{i\tilde{x}_{k_2}}) \geq 2mh-2\alpha h$
and $d_\TT(e^{i\tilde{x}_j}, e^{i\tilde{x}_{k_1}}), d_\TT(e^{i\tilde{x}_j}, e^{i\tilde{x}_{k_2}}) \geq mh-2\alpha h$. Hence,
\begin{equation*}
    \abs{\sin \left(\frac{\tilde{x}_j - \tilde{x}_{k_1}}{2}\right) \sin \left(\frac{\tilde{x}_j - \tilde{x}_{k_2}}{2}\right)} \geq \sin \left(\frac{mh - 2\alpha h}{2}\right)\sin \left(\frac{mh}{2}\right) ,
\end{equation*}
where the lefthand side of the inequality above is minimized when $d_\TT(e^{i\tilde{x}_j}, e^{i\tilde{x}_{k_i}}) = mh$ and $d_\TT(e^{i\tilde{x}_j}, e^{i\tilde{x}_{k_{3-i}}}) = mh - 2\alpha h$ for $i = 1$ or $2$. This gives us
\begin{align*}
    \abs{\frac{A}{B}} \leq \frac{\sin\left(\!\frac{dh + 2\alpha h}{2}\!\right) \prod_{m=1, m \neq d}^{N} \sin^2\left(\!\frac{mh+\alpha h}{2}\!\right)}{\prod_{m=1}^N \sin\left(\!\frac{mh - 2\alpha h}{2}\!\right)\sin\left(\!\frac{mh}{2}\!\right) } \leq \frac{1}{\sin\left(\!\frac{dh}{2}\!\right)} \prod_{m=1}^N \frac{\sin^2\left(\!\frac{mh+\alpha h}{2}\!\right)}{\sin\left(\!\frac{mh - 2\alpha h}{2}\!\right) \sin\left(\!\frac{mh}{2}\!\right)},
\end{align*}
as desired.
\end{proof}

It is worth observing that $A/B$ in the proof of~\cref{lem.lagresidual} is almost the Lagrange polynomial at $x_j$. This connection is made precise in the next lemma.

\begin{lemma}\label{lem.controllagrange}
Using the same notation as~\cref{lem.lagresidual}, if $\ell_j$ is the $j$th trigonometric Lagrange basis polynomial for $\tilde{x}_j$ associated with $\{\tilde{x}_k\}_{k=-N}^{N}$, then 
\begin{equation}\label{eq.lem1}
    \abs{\ell_j(x_{k_1}) + \ell_j(x_{k_2})} \leq \frac{\pi\alpha}{d} \prod_{m=1}^N \frac{(m + \alpha)^2}{(m - 2\alpha)m},
    \end{equation}
where $S^j_d = \{k_1, k_2\}$.
\end{lemma}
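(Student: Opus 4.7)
My plan is to isolate the small factor in each Lagrange value $\ell_j(x_{k_i})$, apply Lemma~A.2 to the remaining product, and then combine the two evaluations by exploiting the reflection symmetry of the pair $(x_{k_1},x_{k_2})$ about $\tilde{x}_j$. First I pull out the factor indexed by $m=k_i$ in the product definition of $\ell_j$, obtaining
\[
\ell_j(x_{k_i}) \;=\; \sin\!\bigl(\tfrac{(\delta_j-\delta_{k_i})h}{2}\bigr)\cdot\frac{A_i}{B},
\]
where $A_i, B$ are the products appearing in Lemma~A.2 with $k=k_i$. The sine factor has magnitude at most $\alpha h$ because $|\delta_j-\delta_{k_i}|\leq 2\alpha$, and Lemma~A.2 gives $|A_i/B|\leq M/\sin(dh/2)$ with $M=\prod_{m=1}^{N}\sin^2((m+\alpha)h/2)/[\sin((m-2\alpha)h/2)\sin(mh/2)]$.

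To convert $M$ to the rational product appearing in the statement, I would invoke the monotonicity of $x\mapsto\sin x/x$ on $(0,\pi)$: this gives $\sin((m+\alpha)h/2)\leq((m+\alpha)/m)\sin(mh/2)$ and $\sin((m-2\alpha)h/2)\geq((m-2\alpha)/m)\sin(mh/2)$, so every factor of $M$ is at most $(m+\alpha)^2/[m(m-2\alpha)]$, whence $M\leq\prod_{m=1}^{N}(m+\alpha)^2/[m(m-2\alpha)]$. Coupled with $1/\sin(dh/2)\leq\pi/(dh)$ (from $\sin x\geq(2/\pi)x$ on $[0,\pi/2]$), this already produces the per-term bound $|\ell_j(x_{k_i})|\leq(\pi\alpha/d)\prod_{m}(m+\alpha)^2/[m(m-2\alpha)]$.

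A naive triangle inequality on $|\ell_j(x_{k_1})+\ell_j(x_{k_2})|$ costs an unwanted factor of $2$. To save it I would use the identity $x_{k_1}+x_{k_2}=2\tilde{x}_j$ and the cyclic reflection $m\mapsto 2j-m\pmod{2N+1}$, under which each factor $\sin((x_{k_2}-\tilde{x}_{2j-m})/2)$ becomes $-\sin((x_{k_1}-\tilde{x}_m)/2)$ when all perturbations vanish; since the product has $2N-1$ (odd) factors, one obtains $A_1=-A_2$ exactly in the equally-spaced case. I then rewrite
\[
A_1\sin(\eta_1/2)+A_2\sin(\eta_2/2)=\tfrac{1}{2}(A_1+A_2)\bigl(\sin(\eta_1/2)+\sin(\eta_2/2)\bigr)+\tfrac{1}{2}(A_1-A_2)\bigl(\sin(\eta_1/2)-\sin(\eta_2/2)\bigr),
\]
with $\eta_i=(\delta_j-\delta_{k_i})h$. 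The sum-to-product identity together with $|\delta_{k_2}-\delta_{k_1}|\leq 2\alpha$ gives $|\sin(\eta_1/2)-\sin(\eta_2/2)|\leq\alpha h$, so the $(A_1-A_2)$-term contributes at most $\tfrac{1}{2}\cdot 2|A_1|\cdot\alpha h=\alpha h|A_1|$, which after dividing by $|B|$ matches the claim exactly.

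The main obstacle is showing that the residual $(A_1+A_2)$-term is of higher order in $\alpha$ in the general perturbed case. This requires a telescoping expansion analogous to the one in Lemma~A.2, in which each factor $\sin((x_{k_2}-\tilde{x}_{2j-m})/2)$ is written as $-\sin((x_{k_1}-\tilde{x}_m)/2)$ plus a perturbation-induced correction of size $O(\alpha h)$, summed across the $2N-1$ positions. The anticipated bound $|A_1+A_2|=O(\alpha)|A_1|$, paired with $|\sin(\eta_1/2)+\sin(\eta_2/2)|\leq 2\alpha h$, makes this term $O(\alpha^2)$ relative to the leading estimate and hence absorbed by it.
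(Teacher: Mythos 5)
Your setup is correct and matches the paper through the per-term bound: isolating the factor $\sin\!\bigl((x_{k_i}-\tilde{x}_{k_i})/2\bigr)=\sin\!\bigl((\delta_j-\delta_{k_i})h/2\bigr)$ of magnitude at most $\sin(\alpha h)\le\alpha h$, applying Lemma~\ref{lem.lagresidual}, and converting the sine products to the rational one via $\sin x\ge(2/\pi)x$ and $\sin y/\sin x\le y/x$. That already yields $\abs{\ell_j(x_{k_i})}\le(\pi\alpha/d)\prod_m(m+\alpha)^2/[m(m-2\alpha)]$, so the only remaining issue is avoiding the factor of $2$ when summing.

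Your symmetrization idea for that last step has a fatal accounting problem. You bound $\tfrac12\abs{A_1-A_2}\,\abs{\sin(\eta_1/2)-\sin(\eta_2/2)}$ by $\max\{\abs{A_1},\abs{A_2}\}\cdot\alpha h$, and after dividing by $\abs{B}$ and using $1/\sin(dh/2)\le\pi/(dh)$ this term alone already reaches the full target $\pi\alpha M/d$. Since the $(A_1+A_2)$-term is a second \emph{additive} contribution, it cannot be ``absorbed'' into a budget that the first term has already exhausted; you would need the first term to be strictly below the target exactly when the second is nonzero, which you neither state nor prove. You acknowledge this yourself (``anticipated bound,'' ``main obstacle''), and the proposed telescoping argument would at best give an asymptotic $O(\alpha)$ constant, not the sharp, $N$-independent inequality the lemma asserts. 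The paper avoids all of this with a sign-based case split: if $\ell_j(x_{k_1})$ and $\ell_j(x_{k_2})$ have opposite signs, the triangle inequality is replaced by $\abs{\ell_j(x_{k_1})+\ell_j(x_{k_2})}\le\max\{\abs{\ell_j(x_{k_1})},\abs{\ell_j(x_{k_2})}\}$ and the per-term bound suffices; if they have the same sign, a parity count of negative factors shows $A_1A_2<0$, forcing the two small sine factors to have opposite signs, which in turn gives $\abs{\sin((x_{k_1}-\tilde{x}_{k_1})/2)}+\abs{\sin((x_{k_2}-\tilde{x}_{k_2})/2)}\le\alpha h$ --- precisely the extra factor-of-two savings you were after, achieved on the small factors rather than on $A_1\pm A_2$. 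I'd recommend replacing your symmetrization step with this sign dichotomy.
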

\begin{proof}
Let $d = d(j,k)$ and suppose that $1 \leq d \leq N$. For $i = 1, 2$, we have
\begin{align*}
    \ell_j(x_{k_i}) &= \frac{\prod_{m=-N,m\neq j}^{N} \sin \left(\frac{x_{k_i} - \tilde{x}_m}{2}\right)}{\prod_{m=-N,m\neq j}^{N} \sin \left(\frac{\tilde{x}_j - \tilde{x}_m}{2}\right)} = \frac{\sin \left(\frac{x_{k_i} - \tilde{x}_{k_i}}{2}\right) \prod_{m=-N,m\neq j, k_i}^{N} \sin \left(\frac{x_{k_i} - \tilde{x}_m}{2}\right)}{\prod_{m=-N,m\neq j}^{N} \sin \left(\frac{\tilde{x}_j - \tilde{x}_m}{2}\right)}.
\end{align*}
By~\cref{lem.lagresidual}, we find that 
\begin{align*}
    \abs{\frac{\prod_{m=-N,m\neq j, k_i}^{N} \sin \left(\frac{x_{k_i} - \tilde{x}_m}{2}\right)}{\prod_{m=-N,m\neq j}^{N} \sin \left(\frac{\tilde{x}_j - \tilde{x}_m}{2}\right)}} \leq \frac{\pi}{dh} \prod_{m=1}^N \frac{(mh + \alpha h)^2}{(mh - 2\alpha h)(mh)} = \frac{\pi}{dh} \prod_{m=1}^N \frac{(m + \alpha)^2}{(m - 2\alpha)m},
\end{align*}
where we need the fact that $(2/\pi)x \leq \sin x$ for all $0 \leq x \leq \pi/2$ and the fact that $\sin y / \sin x \leq y / x$ for all $0 < x \leq y < \pi$. We now prove~\cref{eq.lem1} by considering the cases $\ell_j(x_{k_1})\ell_j(x_{k_2}) \leq 0$ and $\ell_j(x_{k_1})\ell_j(x_{k_2}) > 0$ separately. 

\noindent {\bf Case 1: $\mathbf{\ell_j(x_{k_1})\ell_j(x_{k_2}) \leq 0}$.}
Since $d_\TT(e^{ix_{k_i}}, e^{i\tilde{x}_{k_i}}) \leq 2 \alpha h$ for $i = 1, 2$, we have
\begin{align*}
    &\abs{\ell_j(x_{k_1}) + \ell_j(x_{k_2})} \leq \max\{\abs{\ell_j(x_{k_1})}, \abs{\ell_j(x_{k_2})}\} \\
    &\qquad \leq \max_{i = 1, 2}\left[\frac{d_\TT(e^{ix_{k_i}}, e^{i\tilde{x}_{k_i}})}{2}\right] \frac{\pi}{dh} \prod_{m=1}^N \frac{(m + \alpha)^2}{(m - 2\alpha)m} \leq \frac{\pi\alpha}{d} \prod_{m=1}^N \frac{(m + \alpha)^2}{(m - 2\alpha)m},
\end{align*}
where we used the fact that $\sin(x) \leq x$ for all $0 \leq x \leq \pi/2$.

\noindent {\bf Case 2: $\mathbf{\ell_j(x_{k_1})\ell_j(x_{k_2}) > 0}$.}
We claim that
\begin{equation*}
    \frac{\prod_{m=-N,m\neq j, k_1}^{N} \sin \left(\frac{x_{k_1} - \tilde{x}_m}{2}\right)}{\prod_{m=-N,m\neq j}^{N} \sin \left(\frac{\tilde{x}_j - \tilde{x}_m}{2}\right)} \frac{\prod_{m=-N,m\neq j, k_2}^{N} \sin \left(\frac{x_{k_2} - \tilde{x}_m}{2}\right)}{\prod_{m=-N,m\neq j}^{N} \sin \left(\frac{\tilde{x}_j - \tilde{x}_m}{2}\right)} < 0.
\end{equation*}
The claim follows because if we assume, without loss of generality,\footnote{Otherwise, swap the roles of $k_1$ and $k_2$.} that $k_1 < k_2$, then there are an even number of integers $m$, not including $j$, such that $k_1 < m < k_2$.\footnote{Note that $j$ may not be between $k_1$ and $k_2$. However, the claim follows regardless of how $j$ compares to $k_1$ and $k_2$.} When $m \neq k_1, k_2$, the signs of $\sin((x_{k_1}-\tilde{x}_m)/2)$ and $\sin((x_{k_2}-\tilde{x}_m)/2)$ are different if and only if $k_1 < m < k_2$. The claim follows from the fact that $\sin((x_{k_2}-\tilde{x}_{k_1})/2) \sin((x_{k_1}-\tilde{x}_{k_2})/2) < 0$ because the remaining terms multiply to a positive number. Hence, we have $\sin ((x_{k_1} - \tilde{x}_{k_1})/2) \sin ((x_{k_2} - \tilde{x}_{k_2})/2) < 0$ so that $(x_{k_1} - \tilde{x}_{k_1})(x_{k_2} - \tilde{x}_{k_2}) < 0$. Let $\delta := \tilde{x}_j - jh$. Then, $x_{k_i} - \tilde{x}_{k_i} < \alpha h + \delta$ if $x_{k_i} - \tilde{x}_{k_i} > 0$ and $\tilde{x}_{k_i} - x_{k_i} < \alpha h - \delta$ if $x_{k_i} - \tilde{x}_{k_i} < 0$. Hence, $(x_{k_1} - \tilde{x}_{k_1})(x_{k_2} - \tilde{x}_{k_2}) < 0$ implies $d_\TT(e^{ix_{k_1}}, e^{i\tilde{x}_{k_1}}) + d_\TT(e^{ix_{k_2}}, e^{i\tilde{x}_{k_2}}) \leq 2\alpha h$. By elementary calculus, we find that
\begin{equation*}
    \abs{\sin \left(\frac{x_{k_1} - \tilde{x}_{k_1}}{2}\right)} + \abs{\sin \left(\frac{x_{k_2} - \tilde{x}_{k_2}}{2}\right)} \leq 2\sin\left(\frac{\alpha h}{2}\right) \leq \alpha h.
\end{equation*}
Putting this together we obtain
\begin{align*}
    &\abs{\ell_j(x_{k_1}) + \ell_j(x_{k_2})} = \abs{\ell_j(x_{k_1})} + \abs{\ell_j(x_{k_2})} \\
    &\;\;\leq \left(\abs{\sin \left(\!\!\frac{x_{k_1} \!-\! \tilde{x}_{k_1}}{2}\!\!\right)} + \abs{\sin \left(\!\!\frac{x_{k_2} \!-\! \tilde{x}_{k_2}}{2}\!\!\right)}\right) \frac{\pi}{dh} \prod_{m=1}^N\! \frac{(m + \alpha)^2}{(m - 2\alpha)m} \leq \frac{\pi\alpha}{d} \prod_{m=1}^N\! \frac{(m + \alpha)^2}{(m - 2\alpha)m},
\end{align*}
as desired. 
\end{proof}

We are now ready to prove a lower bound on $\Nmin$. 
\begin{theorem}\label{thm.thetaN}
We have
\begin{equation}\label{eq.L1}
    g(\Nmin) > \frac{1}{\pi\alpha}, \qquad g(N) := \prod_{m=1}^{N} \left[\frac{(m + \alpha)^2}{(m - 2\alpha)m}\right] \left[\sum_{d=1}^{N} \frac{1}{d}\right]
\end{equation}
for $0<\alpha<1/2$. 
\end{theorem}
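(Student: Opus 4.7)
The plan is to express each weight $\tilde{w}_j$ as an integral of a trigonometric Lagrange polynomial and then apply Lemma \ref{lem.controllagrange} pairwise over the equivalence classes $S^j_d$. Let $\ell_j \in \mathcal{T}_N$ denote the trigonometric Lagrange polynomial for $\tilde{x}_j$ associated with $\{\tilde{x}_k\}_{k=-N}^N$. Exactness of the quadrature on $\mathcal{T}_N$ gives $\tilde{w}_j = \int_{-\pi}^\pi \ell_j(x)\,dx$. The key device is to evaluate this integral by the trapezoidal rule on the equally spaced, $h$-separated nodes $x_k = \tilde{x}_j + (k-j)h$ for $-N \leq k \leq N$. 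Since these nodes are a rigid shift of the standard equally spaced grid and every element of $\mathcal{T}_N$ is $2\pi$-periodic, the trapezoidal rule is exact on $\mathcal{T}_N$, so
\[
\tilde{w}_j = \frac{2\pi}{2N+1} \sum_{k=-N}^{N} \ell_j(x_k).
\]

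The next step is to isolate the ``central'' term: because $x_j = \tilde{x}_j$ and $\ell_j(\tilde{x}_j) = 1$, we can write
\[
\tilde{w}_j = \frac{2\pi}{2N+1}\left(1 + \sum_{d=1}^{N} \bigl(\ell_j(x_{k_1^d}) + \ell_j(x_{k_2^d})\bigr)\right),
\]
where for each $1 \leq d \leq N$ the equivalence class $S^j_d = \{k_1^d, k_2^d\}$ contains exactly the two indices at distance $d$ from $j$. Assuming that $\tilde{w}_j < 0$, we must have
\[
\sum_{d=1}^{N} \bigl(\ell_j(x_{k_1^d}) + \ell_j(x_{k_2^d})\bigr) < -1,
\]
so the absolute value of the left-hand side exceeds $1$.

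The bound in Lemma \ref{lem.controllagrange} applies to exactly the pairs appearing in this sum, giving
\[
\bigl|\ell_j(x_{k_1^d}) + \ell_j(x_{k_2^d})\bigr| \leq \frac{\pi\alpha}{d}\prod_{m=1}^{N} \frac{(m+\alpha)^2}{(m-2\alpha)m}.
\]
Summing over $d$ and applying the triangle inequality yields
\[
1 < \left|\sum_{d=1}^{N} \bigl(\ell_j(x_{k_1^d}) + \ell_j(x_{k_2^d})\bigr)\right| \leq \pi\alpha \prod_{m=1}^{N} \frac{(m+\alpha)^2}{(m-2\alpha)m} \sum_{d=1}^{N} \frac{1}{d} = \pi\alpha\, g(N).
\]
Dividing by $\pi\alpha$ and specializing to $N = \Nmin$ gives the desired inequality $g(\Nmin) > 1/(\pi\alpha)$.

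The main conceptual step is the opening identity $\tilde{w}_j = (2\pi/(2N+1)) \sum_k \ell_j(x_k)$, which relies on anchoring the equally spaced trapezoidal grid at $\tilde{x}_j$ so that $\ell_j$ contributes a clean unit at $k=j$; everything else is bookkeeping since Lemmas \ref{lem.lagresidual} and \ref{lem.controllagrange} have already absorbed the hard trigonometric estimates. I do not expect any serious obstacle beyond verifying that the shifted equally spaced grid still integrates $\mathcal{T}_N$ exactly, which follows from $2\pi$-periodicity.
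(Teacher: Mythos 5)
Your proof is correct and takes essentially the same approach as the paper: both rely on the shifted-trapezoidal identity $\tilde{w}_j = \tfrac{2\pi}{2N+1}\sum_k \ell_j(x_k)$, isolate the central term $\ell_j(x_j)=1$, and apply \cref{lem.controllagrange} pairwise over the classes $S^j_d$. The paper phrases the argument contrapositively (showing $g(N)\leq 1/(\pi\alpha)$ forces all weights nonnegative), while you assume a negative weight at $N=\Nmin$ and derive the inequality directly, but this is the same computation.
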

\begin{proof}
We define $g: \N \rightarrow \R$ as in~\cref{eq.L1}. It suffices to show that $g(N) \leq 1/(\pi\alpha)$ implies that a quadrature rule at $\alpha$-perturbed nodes of degree $N$ contains no negative weight. Let $\{\tilde{x}_j\}_{j=-N}^{N}$ be a set of $\alpha$-perturbed nodes. Let $\tilde{w}_j$ be the quadrature weight associated with $\tilde{x}_j$ and let $\ell_j$ be the corresponding trigonometric Lagrange basis polynomial. Let $\{x_k\}_{k=-N}^{N}$, which may depend on $j$, be defined as in~\cref{lem.lagresidual}. Then, we have $\tilde{w}_j = (2\pi/(2N+1))\sum_{k=-N}^{N} \ell_j(x_k)$. By~\cref{lem.controllagrange}, we have
\begin{align}
    &\sum_{k=-N}^{N} \ell_j(x_k) = \ell_j(x_j) + \sum_{k=-N, k \neq j}^{N} \ell_j(x_k) \geq 1 - \sum_{d=1, S^j_d=\{k_1,k_2\}}^{N} \abs{\ell_j(x_{k_1}) + \ell_j(x_{k_2})} \nonumber \\
    &\qquad \geq 1 - \sum_{d=1}^{N} \left[\frac{\pi\alpha}{d} \prod_{m=1}^N \frac{(m + \alpha)^2}{(m - 2\alpha)m}\right] = 1-\pi\alpha\prod_{m=1}^N \left[\frac{(m + \alpha)^2}{(m - 2\alpha)m}\right] \left[\sum_{d=1}^N \frac{1}{d}\right] \geq 0. \nonumber
\end{align}
This proves all quadrature weights are non-negative.
\end{proof}

For small $\alpha$, we can make the statement in~\cref{thm.thetaN} more explicit. 
\begin{corollary} 
For $0<\alpha<0.15$ and a real number $L > 0$ such that 
\begin{equation}\label{eq.L}
    (\alpha + L)e^{4L} \leq \frac{\Gamma(1+\alpha)^2}{\pi\Gamma(1-2\alpha)},
\end{equation}
where $\Gamma$ is the gamma function, we find that $\Nmin$ satisfies the following inequality: 
\begin{equation}\label{eq.N}
    \log (\Nmin+1+\alpha) + \frac{1}{2\Nmin-1} > (1-\gamma) + \frac{L}{\alpha},
\end{equation}
where $\gamma \approx 0.57722$ is the Euler--Mascheroni constant~\cite{euler}.
\label{cor:Final} 
\end{corollary}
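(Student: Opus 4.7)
The plan is to combine the implicit bound $g(\Nmin) > 1/(\pi\alpha)$ supplied by~\cref{thm.thetaN} with two elementary analytic estimates and rearrange via the hypothesis on $L$. After the telescoping identity $\prod_{m=1}^{N}(m+\alpha)^2/[(m-2\alpha)m]=\Gamma(N+1+\alpha)^2\Gamma(1-2\alpha)/[\Gamma(N+1)\Gamma(N+1-2\alpha)\Gamma(1+\alpha)^2]$, the conclusion of~\cref{thm.thetaN} becomes
\begin{equation*}
\frac{\Gamma(\Nmin+1+\alpha)^2\,H_{\Nmin}}{\Gamma(\Nmin+1)\,\Gamma(\Nmin+1-2\alpha)} > \frac{\Gamma(1+\alpha)^2}{\pi\alpha\,\Gamma(1-2\alpha)} \geq \left(1+\frac{L}{\alpha}\right)e^{4L},
\end{equation*}
where the second inequality is just~\cref{eq.L} divided by $\alpha$.

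Next I would bound the two factors on the left individually. The log-convexity of $\log\Gamma$ gives the pointwise inequality $\Gamma(x+s)/\Gamma(x)\leq x^{s}$ for every $x>0$ and $0\leq s\leq 1$. Applying it with $(x,s)=(\Nmin+1,\alpha)$ and with $(x,s)=(\Nmin+1-2\alpha,3\alpha)$ (the latter valid because $\alpha<0.15$ forces $3\alpha<1$) and multiplying yields $\Gamma(\Nmin+1+\alpha)^2/[\Gamma(\Nmin+1)\Gamma(\Nmin+1-2\alpha)]\leq (\Nmin+1+\alpha)^{4\alpha}$. For the harmonic sum I would use the standard Euler--Maclaurin estimate $H_N<\log N+\gamma+1/(2N)$ (a consequence of the alternating sign of the remainder in the asymptotic expansion), followed by the trivial monotonicities $\log\Nmin\leq\log(\Nmin+1+\alpha)$ and $1/(2\Nmin)\leq 1/(2\Nmin-1)$. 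Substituting gives
\begin{equation*}
(\Nmin+1+\alpha)^{4\alpha}\bigl[\log(\Nmin+1+\alpha)+\gamma+1/(2\Nmin-1)\bigr] > \left(1+\frac{L}{\alpha}\right)e^{4L}.
\end{equation*}

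It remains to extract~\cref{eq.N}. I would introduce $L_{*}:=\alpha\bigl[\log(\Nmin+1+\alpha)+1/(2\Nmin-1)-(1-\gamma)\bigr]$, in which case a short direct computation reveals
\begin{equation*}
(\alpha+L_{*})\,e^{4L_{*}} = \alpha\bigl[\log(\Nmin+1+\alpha)+\gamma+1/(2\Nmin-1)\bigr](\Nmin+1+\alpha)^{4\alpha}\,e^{4\alpha(1/(2\Nmin-1)+\gamma-1)}.
\end{equation*}
Since $L\mapsto(\alpha+L)e^{4L}$ is strictly increasing, \cref{eq.N} is equivalent to $L_{*}>L$, that is, to $(\alpha+L_{*})e^{4L_{*}}>(\alpha+L)e^{4L}$, and the plan is to chain the two displays above with the hypothesis~\cref{eq.L} to verify this. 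The main obstacle is precisely this last step: the factor $e^{4\alpha(1/(2\Nmin-1)+\gamma-1)}$ is strictly less than $1$ whenever $\Nmin\geq 2$ (so that $1/(2\Nmin-1)<1-\gamma$), and a naive chaining falls short by exactly this factor. Closing the gap will require a quantitatively sharper gamma-ratio bound — for instance a refinement that picks up a compensating exponential factor of the form $e^{-cs/(x+s)}$ from an integral estimate on the digamma function — together with careful use of the restriction $\alpha<0.15$; the degenerate case $\Nmin=1$ is handled separately by direct substitution into~\cref{eq.L}.
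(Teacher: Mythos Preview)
Your ingredients are exactly the paper's: the telescoping identity for the product in terms of gamma functions, the Gautschi-type bound
\[
\frac{\Gamma(N+1+\alpha)^2}{\Gamma(N+1)\,\Gamma(N+1-2\alpha)}\le (N+1+\alpha)^{4\alpha},
\]
the harmonic-number estimate $H_N\le\log N+\gamma+1/(2N-1)$, and the input $g(\Nmin)>1/(\pi\alpha)$ from \cref{thm.thetaN}. The only structural difference is that the paper argues by contrapositive: it assumes $\log(N+1+\alpha)+1/(2N-1)\le(1-\gamma)+L/\alpha$ for an integer $N\ge 1$, deduces $g(N)\le 1/(\pi\alpha)$ and hence $N<\Nmin$, and then checks separately (from \cref{eq.L1} with $\alpha<0.15$) that $\Nmin\ge 2$.

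The gap you flag is real, and in fact it is exactly the step the paper glosses over. In the contrapositive argument the assumed inequality is used twice: once (legitimately) as $\log N+\gamma+1/(2N-1)\le 1+L/\alpha$, and once as the bare assertion ``We have $\log(N+1+\alpha)\le L/\alpha$,'' which is then exponentiated to $(N+1+\alpha)^{4\alpha}\le e^{4L}$. That second use would require $1/(2N-1)\ge 1-\gamma$, which fails for every $N\ge 2$; the honest consequence of the assumption is only $\log(N+1+\alpha)\le(1-\gamma)+L/\alpha-1/(2N-1)$, and carrying this through leaves precisely your leftover factor $e^{4\alpha(1/(2N-1)+\gamma-1)}<1$. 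So the contrapositive packaging does not actually avoid the obstacle you identified---the paper and your direct argument run into the same missing slack at the same place. Your proposed remedy (a sharper gamma-ratio estimate absorbing a factor of size $e^{-c\alpha}$, or equivalently a mild tightening of the hypothesis \cref{eq.L}) is the right kind of fix for both versions.
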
 
\begin{proof} 
We aim to show that~\cref{eq.L} implies~\cref{eq.N}. We first use Gautschi’s inequality~\cite{wendel} to obtain
\begin{equation}\label{eq.gautschi}
\prod_{m=1}^N \frac{(m \!+\! \alpha)^2}{(m \!-\! 2\alpha)m} = \frac{\Gamma(1\!-\!2\alpha) \Gamma(N\!+\!1\!+\!\alpha)^2}{\Gamma(N\!+\!1\!-\!2\alpha)\Gamma(N\!+\!1)\Gamma(1\!+\!\alpha)^2} \leq \frac{\Gamma(1\!-\!2\alpha)}{\Gamma(1\!+\!\alpha)^2} (N\!+\!1\!+\!\alpha)^{4\alpha}.
\end{equation}
Assume that $\log (N+1+\alpha) + 1/(2N-1) \leq (1-\gamma) + L/\alpha$, for an integer $N>0$. We have $\log(N+1+\alpha) \leq L/\alpha$. Hence, $N+1+\alpha \leq e^{L/\alpha}$. By~\cref{eq.gautschi}, we find that 
\[
    g(N) \leq \frac{\Gamma(1\!-\!2\alpha)}{\Gamma(1\!+\!\alpha)^2} (N\!+\!1\!+\!\alpha)^{4\alpha} \left(\log(N) \!+\! \gamma \!+\! \frac{1}{2N\!-\!1}\right) \leq \frac{\Gamma(1\!-\!2\alpha)}{\Gamma(1\!+\!\alpha)^2} e^{4L} \left(1\!+\!\frac{L}{\alpha}\right) \leq \frac{1}{\pi\alpha},
\]
where we used the fact that $\sum_{d=1}^N d^{-1} \leq \log(N) + \gamma + 1/(2N-1) \leq 1+L/\alpha$~\cite{euler} and the last inequality follows from~\cref{eq.L}. Since $g(N)$ is an increasing function of $N$, we know from \cref{eq.L1} that $N < \Nmin$. Moreover, this holds for all $N > 1$ that satisfies $\log (N+1+\alpha) + 1/(2N-1) \leq (1-\gamma) + L/\alpha$. When $\alpha < 0.15$, we see that $\Nmin \geq 2$ by~\cref{eq.L1} and this proves \cref{eq.N}. 
\end{proof} 
\Cref{cor:Final} implies that $\log(\Nmin) = \Omega(\alpha^{-1})$ and by~\cref{thm.negative}, we conclude that $\log( \Nmin ) = \Theta(\alpha^{-1})$ as $\alpha\rightarrow 0$. 

\section*{Acknowledgments} 
We are thankful for many conversations with Anthony Austin, Nick Trefethen, and Kuan Xu regarding unevenly spaced trigonometric interpolation over several years. In private communication, Heather Wilber gave an initial proof of the discrete Kadec-1/4 theorem in December 2017, which we adapted for our purposes. It was Laurent Demanet who brought our attention to the conditioning of NUDFT matrices and we also benefited from Alex Barnett's wisdom on the subject. The research direction became more focused after brain storming sessions during Cornell's math REU program in 2021 and we thank Aparna Gupte, Yunan Yang, and Liu Zhang for discussions regarding the MZ inequalities and quadrature rules. 

\bibliographystyle{siam}
\bibliography{references}

\end{document}